\newcommand{\nospacepunct}[1]{\makebox[0pt][l]{\,#1}}
\setlist[enumerate]{nosep}
\newcommand\redsout{\bgroup\markoverwith{\textcolor{red}{\rule[0.5ex]{2pt}{0.8pt}}}\ULon}
\newtheorem{theorem}{Theorem}[section]
\newtheorem*{theorem*}{Theorem}
\newtheorem{lemma}[theorem]{Lemma}
\newtheorem*{lemma*}{Lemma}
\newtheorem{corollary}[theorem]{Corollary}
\newtheorem{proposition}[theorem]{Proposition}
\newtheorem{remark}[theorem]{Remark}
\newtheorem{definition}[theorem]{Definition}
\newtheorem*{definition*}{Definition}
\newtheorem*{definitions*}{Definitions}
\newtheorem*{question*}{Question}
\newtheorem*{questions*}{Questions}
\newcommand{\cC}{\mathcal C}
\newcommand{\cE}{\mathcal E}
\newcommand{\cF}{\mathcal F}
\newcommand{\cK}{\mathcal K}
\newcommand{\cO}{\mathcal O}
\newcommand{\cP}{\mathcal P}
\newcommand{\cQ}{\mathcal Q}
\newcommand{\cR}{\mathcal R}
\newcommand{\cX}{\mathcal X}
\newcommand{\cZ}{\mathcal Z}
\def\Az{\mathbb{A}}
\def\Cz{\mathbb{C}}
\def\Kz{\mathbb{K}}
\def\Qz{\mathbb{Q}}
\def\Rz{\mathbb{R}}
\def\Tz{\mathbb{T}}
\def\Zz{\mathbb{Z}}
\def\1z{\mathbb{1}}
\newcommand{\fA}{\mathfrak A}
\newcommand{\fB}{\mathfrak B}
\def\SEMI{\mbox{$\times\kern-2pt\vrule height5pt width.6pt \kern3pt $}}
\newcommand{\id}{{\rm id}}
\newcommand{\p}{\mathfrak{p}}
\newcommand{\Prim}{\textup{Prim}}
\newcommand{\acts}{\curvearrowright} 
\renewcommand{\phi}{\varphi}
\newcommand{\eps}{\varepsilon}
\newcommand{\im}{\textup{im}}
\renewcommand{\O}{\mathcal{O}}
\newcommand{\ol}{\overline}
\newcommand{\pmone}{\{\pm 1\}}
\newcommand{\K}{\textup{K}}
\newcommand{\KK}{\textup{KK}}
\newcommand{\ho}{\hat{\otimes}}
\DeclareMathOperator{\eval}{ev}
\DeclareMathOperator{\Abs}{Abs}
\newcommand{\A}{\mathscr{A}}
\newcommand{\B}{\mathscr{B}}
\begin{document}

\title[On the C*-algebra associated with the full adele ring]{On the C*-algebra associated with the full adele ring\\ of a number field}

\thispagestyle{fancy}

\author{Chris Bruce}\thanks{C. Bruce was supported by a Banting Fellowship administered by the Natural Sciences and Engineering Research Council of Canada (NSERC) and has received funding from the European Union’s Horizon 2020 research and innovation programme under the Marie Sklodowska-Curie grant agreement No 101022531 and the European Research Council (ERC) under the European Union’s Horizon 2020 research and innovation programme (grant agreement No. 817597).}

\address[Chris Bruce]{School of Mathematics and Statistics, University of Glasgow, University Place, Glasgow G12 8QQ, United Kingdom}
\email[Bruce]{Chris.Bruce@glasgow.ac.uk}

\author{Takuya Takeishi}\thanks{T. Takeishi is supported by JSPS KAKENHI Grant Number JP19K14551.}
\address[Takuya Takeishi]{
	Faculty of Arts and Sciences\\
	Kyoto Institute of Technology\\
	Matsugasaki, Sakyo-ku, Kyoto\\
	Japan}
\email[Takeishi]{takeishi@kit.ac.jp}

\date{\today}

\subjclass[2020]{Primary 46L05, 46L80; Secondary 11R04, 11R56, 46L35.}
\maketitle
\begin{abstract} 
The multiplicative group of a number field acts by multiplication on the full adele ring of the field.
Generalising a theorem of Laca and Raeburn, we explicitly describe the primitive ideal space of the crossed product C*-algebra associated with this action. We then distinguish real, complex, and finite places of the number field using \K-theoretic invariants. Combining these results with a recent rigidity theorem of the authors implies that any *-isomorphism between two such C*-algebras gives rise to an isomorphism of the underlying number fields that is constructed from the *-isomorphism.
\end{abstract}

\setlength{\parindent}{0cm} \setlength{\parskip}{0.5cm}

\section{Introduction} 

Each number field $K$ naturally gives rise to two topological dynamics systems: The multiplicative group $K^*$ of $K$ acts by multiplication both on the full adele ring $\Az_K$ of $K$, and on the finite adele ring $\Az_{K,f}$ of $K$.
Connes defined the \emph{adele class space} of a number field $K$ to be the quotient space $\Az_K/K^*$. This space is used in Connes' reformulation of the Riemann hypothesis \cite{C}, and has since been studied in, for example, \cite{CC:11,CCM,CCM:09}. The space $\Az_K/K^*$ is quite difficult since, for instance, the action $K^*\acts \Az_K$ is ergodic \cite{LagNesh}, so the philosophy of noncommutative geometry says that one should consider the crossed product C*-algebra $\A_K:=C_0(\Az_K)\rtimes K^*$ attached to $K^*\acts\Az_K$. 

This paper is a sequel to \cite{BT}, where we considered the ``finite part'' of $\A_K$, namely the crossed product C*-algebra $\fA_K:=C_0(\Az_{K,f})\rtimes K^*$ attached to $K^*\acts \Az_{K,f}$. We studied primitive ideal spaces and subquotients of $\fA_K$ and related auxiliary C*-algebras, and we proved that this class of C*-algebras is rigid in a strong sense, see \cite[Theorem~1.1]{BT}.
Here, we study the primitive ideal space and subquotients of $\A_K$ and of the auxiliary C*-algebra $\B_K:=C_0(\Az_K/\mu_K)\rtimes \Gamma_K$, where $\mu_K$ is the group of roots of unity in $K$ and $\Gamma_K:=K^*/\mu_K$.
For context and references on C*-algebras of number-theoretic origin, we refer the reader to the introduction of \cite{BT}.
The study of $\A_K$ is complicated by the fact that $\Az_K$ has a large connected component, whereas $\Az_{K,f}$ is totally disconnected. However, the C*-algebra $\A_K$ is one of the most natural C*-algebras associated with a number field. For the number field $K=\Qz$, Laca and Raeburn computed the primitive ideal space $\Prim(\A_\Qz)$ of the C*-algebra $\A_\Qz$ \cite{LR:00}. This space has a rich structure involving the idele class group $C_\Qz$ of $\Qz$, the power set of the primes $2^{\cP_\Qz}$, and the compact group $\widehat{\Qz^*}$. Laca and Raeburn did not address the natural problem of computing $\Prim(\A_K)$ for a general number field $K$, and our first achievement is to give the computation of $\Prim(\A_K)$ for any number field $K$ (Theorem~\ref{thm:basicopensets} and Proposition~\ref{prop:primA}). 
Although the statements from \cite{LR:00} generalise easily, the proofs become much more technical in several places, essentially due to the fact that the idele class group of a general number field does not have a canonical decomposition as a product of a connected group and a totally disconnected group. 
Our results include an explicit basis of open sets for the quasi-orbit space over which $\Prim(\A_K)$ sits, which even in the rational case offers a mild improvement on \cite{LR:00}.

The main achievement of this paper (Theorem~\ref{thm:main}) is a K-theoretic distinction of real, complex, and finite places of $K$ in terms of boundary maps between \K-groups of subquotients of the auxiliary C*-algebra $\B_K$: Associated with each place $v$ of $K$, there is an extension $\cE_K^v$ of subquotients of $\B_K$. We prove that the boundary map $\partial_K^v$ of $\cE_K^v$ is bijective if and only if $v$ is real, is injective but not surjective if and only if $v$ is complex, and is not injective if and only if $v$ is finite. 
The place $v$ also canonically determines a composition factor $\A_K^v$ of $\A_K$, and the aforementioned properties of the boundary maps $\partial_K^v$ imply that the type of the place $v$ is determined by \K-theoretic invariants. Combining this with our description of $\Prim(\A_K)$, we deduce the following consequence: Any *-isomorphism $\alpha\colon\A_K\overset{\cong}{\to}\A_L$ descends to a *-isomorphism $\bar{\alpha}\colon \fA_K\overset{\cong}{\to}\fA_L$. Therefore, by \cite[Theorem~1.1]{BT}, we obtain that $\A_K$ and $\A_L$ are isomorphic if and only if $K$ and $L$ are isomorphic (Corollary~\ref{cor:main}). Moreover, the field isomorphism $K\cong L$ coming from a *-isomorphism $\alpha\colon\A_K\overset{\cong}{\to}\A_L$ is constructed from $\alpha$ as in \cite[Theorem~1.1]{BT}.
It is possible to distinguish infinite and finite places by looking only at the \K-groups of composition factors themselves (Remark~\ref{rmk:ktheory}), and this is sufficient to derive Corollary~\ref{cor:main}. The point here is that we can detect real and complex places via the boundary maps, which is interesting in its own right.
This is the first (complete) rigidity result in the setting of C*-algebras of number-theoretic origin where the C*-algebra is built from an action on a space that is not zero-dimensional.

The paper is organised as follows. Some notation and preliminaries are given in Section~\ref{sec:prelims}. Section~\ref{sec:prims} contains our computation of $\Prim(\A_K)$, and some discussion of extensions and subquotients associated with finite sets of places is given in Section~\ref{sec:subquotients}. Section~\ref{sec:main} contains our main theorem on \K-theoretic distinction of places.

\textbf{Acknowledgement.}
We thank Yuki Arano for helpful comments. 
The final stage of this research was carried out during a visit of C. Bruce to the Kyoto Institute of Technology, which was partially funded by an LMS Early Career Researcher Travel Grant
(Ref: ECR-2122-53); he would like to thank the mathematics department at KIT and the LMS for their support.

\section{Preliminaries}
\label{sec:prelims}
Let $K$ be a number field with ring of integers $\cO_K$, and let $\Sigma_K$ be the set of places of $K$. For $v\in\Sigma_K$, we write $v\mid\infty$ (resp. $v\nmid \infty$) if $v$ is infinite (resp. finite), and we let $\Sigma_{K,f}$ and $\Sigma_{K,\infty}$ denote the sets of finite and infinite places, respectively, so that $\Sigma_K=\Sigma_{K,f}\sqcup\Sigma_{K,\infty}$. 
The absolute value associated with $v\in\Sigma_K$ will be denoted by $|\cdot|_v$, and we shall use $K_v$ to denote the locally compact completion of $K$ with respect to $|\cdot|_v$; when $v$ is finite, we let $\cO_{K,v}$ be the compact ring of integers in $K_v$, and when $v$ is infinite, we put $\cO_{K,v}:=K_v$. 
Then, the full (resp. finite) adele ring of $K$ is the restricted product  $\Az_K:=\prod_{v\in\Sigma_K}'(K_v,\cO_{K,v})$ (resp. $\Az_{K,f}:=\prod_{v\in \Sigma_{K,f}}'(K_v,\cO_{K,v})$). Each $a\in\Az_K$ is of the form $a=(a_\infty,a_f)$, where $a_\infty\in \prod_{v\in\Sigma_K,\infty}K_v$ and $a_f\in\Az_{K,f}$.
Given $a=(a_w)_w\in\Az_K$, and $v\in\Sigma_K$, we let $|a|_v:=|a_v|_v$.
We let $\Az_K^*:=\prod_{v\in \Sigma_K}'(K_v^*,\cO_{K,v}^*)$ be the idele group of $K$ equipped with the restricted product topology, and similarly define $\Az_{K,f}^*:=\prod_{v\in \Sigma_{K,f}}'(K_v^*,\cO_{K,v}^*)$. Then, $K^*:=K\setminus\{0\}$ embeds diagonally into $\Az_K^*$, and the idele class group of $K$ is the quotient $C_K:=\Az_K^*/K^*$.
Let $\cP_K$ be the set of nonzero prime ideals of $\cO_K$. There is canonical bijection $\cP_K\cong \Sigma_{K,f}$ which we denote by $\p\mapsto v_\p$. For $\p\in\cP_K$, we let $|\cdot |_\p:=|\cdot|_{v_\p}$, $K_\p:=K_{v_\p}$ and $\cO_{K,\p}:=\cO_{K,v_\p}$.

\begin{definition}
Let $K$ be a number field, and let $\mu_K$ denote the group of roots of unity in $K$. Put $\Gamma_K:=K^*/\mu_K$. The group $K^*$ acts on $\Az_K$ by multiplication through the diagonal embedding $K^*\hookrightarrow \Az_K$, and this action descends to an action of $\Gamma_K$ on $\Az_K/\mu_K$.
We let 
\[
\A_K:=C_0(\Az_K)\rtimes K^*,
\]
and 
\[
\B_K:=C_0(\Az_K/\mu_K)\rtimes \Gamma_K.
\]
be the associated C*-algebra crossed products.
\end{definition}

The groups $K^*$ and $\Gamma_K$ also act canonically on $\Az_{K,f}$ and $\Az_{K,f}/\mu_K$, respectively. As in \cite{BT}, we let $\fA_K:=C_0(\Az_{K,f})\rtimes K^*$ and $\fB_K:=C_0(\Az_{K,f}/\mu_K)\rtimes\Gamma_K$.

The power-cofinite topology on $2^{\Sigma_K}$ is the topology whose basic open sets are of the form 
\[
U_F:=2^{\Sigma_K\setminus F}=\{T\subseteq\Sigma_K : T\cap F=\emptyset\}
\]
as $F$ ranges over the finite subsets of $\Sigma_K$.

We shall need several notions on C*-algebras over topological spaces. We refer the reader to \cite{Kirchberg,MeyerNest:09} for the general theory and to \cite[\S~4.1~and~4.2]{BT} for a discussion and results tailored to our situation.

Given a C*-algebra $A$, we let $\K_i(A)$ denote the \K-groups of $A$ for $i\in\{0,1\}$, and we put $\K_*(A)=\K_0(A)\oplus \K_1(A)$, which we view as a $\Zz/2\Zz$-graded abelian group. For a short exact sequence of C*-algebras
\[
\cE\colon 0\to B\to E\to A\to 0,
\]
we let $\partial_\cE$ denote the boundary map $\partial_\cE\colon \K_*(A)\to K_{*+1}(B)$, and for a nuclear C*-algebra $D$, we let $\cE\otimes D$ denote the extension 
\[
\cE\otimes D\colon 0\to B\otimes D\to E\otimes D\to A\otimes D\to 0.
\]

\section{The primitive ideal space}
\label{sec:prims}
In this section, we generalise the main results from \cite[\S~3]{LR:00} to arbitrary number fields. 
We first describe the quasi-orbit space for the action $K^*\acts\Az_K$. For background on quasi-orbit spaces, see \cite[Chapter~6]{Will:book} (cf. \cite[\S~4.3]{BT} for a discussion  tailored to our situation).

For $a=(a_v)_v\in\Az_K$, let $\cZ(a):=\{v\in\Sigma_ K: a_v=0\}$.

\begin{proposition}[{cf. \cite[Lemmas~3.1~and~3.2]{LR:00}}]
\label{prop:quasi-orbits}
If $a=(a_v)_v\in\Az_K$, then
\[
\overline{K^*a}=\begin{cases}
K^*a & \text{ if } a\in \Az_K^*,\\
\{b\in\Az : \cZ(a)\subseteq \cZ(b) \} & \text{ if } a\in\Az_K\setminus \Az_K^*.
\end{cases}
\]
\end{proposition}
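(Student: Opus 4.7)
I would begin with the easy inclusion $\overline{K^*a}\subseteq \{b \in \Az_K : \cZ(a)\subseteq\cZ(b)\}$ (when $a \in \Az_K\setminus \Az_K^*$): for $v\in\cZ(a)$, every element of $K^*a$ has vanishing $v$-coordinate, and continuity of the projection $\Az_K\to K_v$ preserves this property under limits.

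The first case is short. When $a \in \Az_K^*$, the orbit $K^*a$ is already closed: $K$ embeds as a discrete closed subring of $\Az_K$ (classical), so $K^* = K\setminus\{0\}$ is closed in $\Az_K$ (as the complement of an isolated point in a discrete closed set), and multiplication by the idele $a$ is a homeomorphism of $\Az_K$ with continuous inverse given by multiplication by $a^{-1}\in\Az_K$. Thus $\overline{K^*a}=K^*a$.

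The substantive step is the reverse inclusion in the second case. Fix $b$ with $\cZ(a)\subseteq\cZ(b)$ and a basic open neighbourhood $V = \prod_{v \in T} V_v \times \prod_{v\notin T}\cO_{K,v}$ of $b$; after enlarging $T$ I may assume it contains $\Sigma_{K,\infty}$ and every finite $v$ with $|a_v|_v > 1$. The condition $xa \in V$ reduces to
\[
x \in a_v^{-1}V_v \text{ for } v \in T\setminus\cZ(a), \qquad v_v(x) \geq -v_v(a_v) \text{ for } v \in \Sigma_{K,f}\setminus T,
\]
with no constraint at $v \in T\cap\cZ(a)$ (since $a_v = b_v = 0$ automatically yields $xa_v = 0 \in V_v$). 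The plan is to realise such an $x \in K^*$ using additive strong approximation: $K$ is dense in $\Az_K^{\{v_0\}}:=\prod'_{v\neq v_0}(K_v,\cO_{K,v})$ for any place $v_0$.

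If $\cZ(a)\neq\emptyset$, I pick $v_0 \in \cZ(a)\setminus T$ as the excluded place; the target $y\in\Az_K^{\{v_0\}}$ defined by $y_v = b_v/a_v$ on $T\setminus\cZ(a)$ and $y_v=0$ elsewhere is well-defined, and strong approximation supplies $x \in K$ (perturbed to lie in $K^*$) realising all the constraints, with the $v_0$-coordinate of the approximant immaterial because $a_{v_0}=0$ forces $xa_{v_0}=0\in V_{v_0}$. The main obstacle is the sub-case $\cZ(a)=\emptyset$ with $a\notin\Az_K^*$: here $\cZ_1:=\{v\in\Sigma_{K,f}:v_v(a_v)>0\}$ is infinite, and picking $v_0\in\cZ_1\setminus T$ gives the weakened integrality inequality $v_{v_0}(x)\geq -v_{v_0}(a_{v_0})$ at $v_0$, which strong approximation does not directly control because the valuation of the approximant at the excluded place is unconstrained. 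I expect to resolve this by combining strong approximation with an adjustment constructed via the Chinese remainder theorem in $\cO_K$, exploiting the infinitude of $\cZ_1$ to enforce the required bound on the valuation at $v_0$ while preserving the approximation at the remaining places; this is the most delicate point of the proof.
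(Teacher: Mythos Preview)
Your treatment of the idele case and of the subcase $\cZ(a)\neq\emptyset$ is correct and matches the paper: discreteness of $K\subseteq\Az_K$ for the former, strong approximation with a place in $\cZ(a)$ omitted for the latter. One small correction: you should allow $v_0\in\cZ(a)$ rather than insisting on $v_0\in\cZ(a)\setminus T$, since if $\cZ(a)\subseteq T$ the latter set is empty; excluding any $v_0\in\cZ(a)$ still works because the constraint there is vacuous.

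The subcase $\cZ(a)=\emptyset$, $a\notin\Az_K^*$ is a real gap, and the adjustment you propose does not close it. After excluding $v_0\in\cZ_1$ and applying strong approximation, the product formula yields only a \emph{lower} bound on $|x|_{v_0}$---because $|x|_v\leq 1$ at the infinitely many remaining integral places---whereas you need the \emph{upper} bound $|x|_{v_0}\leq N(\p_{v_0})^{v_{v_0}(a_{v_0})}$. Any multiplicative correction $u\in K^*$ with $v_{v_0}(u)>0$ and $u\approx 1$ on $T$ must, again by the product formula, satisfy $|u|_w>1$ at some other place $w$; choosing $w\in\cZ_1$ merely transfers the uncontrolled pole to a new excluded place, and the recursion does not terminate. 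The infinitude of $\cZ_1$ offers places to which the problem can be pushed, but no mechanism to make it vanish.

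The paper does not attempt this subcase by strong approximation. It cites the density theorem of Lagarias and Neshveyev \cite[\S~5.1]{LagNesh}, which asserts directly that $K^*a$ is dense in $\Az_K$ whenever $a$ has no zero coordinate but is not an idele; their argument uses compactness properties of the idele class group rather than strong approximation alone, and that is the ingredient your plan is missing.
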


\begin{proof}
If $a \in \Az_K^*$, then $K^*a$ is closed in $\Az_K$ since $K \subseteq \Az_K$ is discrete (see \cite[p.64]{CF}), so that $\ol{K^*a}=K^*a$. If $a \in \Az_K \setminus \Az_K^*$ and $\cZ(a) = \emptyset$, then $K^*a$ is dense in $\Az_K$ by \cite[\S~5.1]{LagNesh}, so that the claim holds in this case. Finally, if $a \in \Az_K$ and $\cZ(a) \neq \emptyset$, then by Strong Approximation, $K$ is dense in $\prod_S' (K_v, \cO_{K,v})$, where $S= \Sigma_K \setminus \cZ(a)$ (see \cite[p.67]{CF}), so a similar argument as in \cite[Lemma~4.10]{BT} verifies the claim. 
\end{proof}

\begin{definition} \label{def:quasi-orbit}
Let
\begin{equation}
\label{eqn:qi}
\Xi\colon \Az_K\to 2^{\Sigma_K}\sqcup C_K,\quad  a\mapsto\begin{cases}
\cZ(a) & \text{ if } a\in\Az_K\setminus\Az_K^*,\\
aK^* & \text{ if } a\in\Az_K^*.
\end{cases}
\end{equation}
\end{definition}
By Proposition~\ref{prop:quasi-orbits}, the map $\Xi$ descends to a bijection from $\cQ(\Az_K/K^*)$ onto $2^{\Sigma_K}\sqcup C_K$. We identify $\cQ(\Az_K/K^*)$ with $2^{\Sigma_K}\sqcup C_K$, and then $\Xi$ is identified with the quasi-orbit map.

\begin{definition}
The \emph{quasi-orbit} topology on $2^{\Sigma_K}\sqcup C_K$ is the topology induced by the map $\Xi$ from Definition~\ref{def:quasi-orbit}.
We let ${\rm qo}$ denote the quasi-orbit topology on $2^{\Sigma_K}\sqcup C_K$, and we let $\rm pc$  denote the power-cofinite topology on $2^{\Sigma_K}$.
\end{definition}

Since the quasi-orbit map $\Az_K\to \cQ(\Az_K/K^*)$ is open, $\Xi$ is an open map by definition of the topology on $2^{\Sigma_K}\sqcup C_K$. Our aim now is to describe this topology, which will be the generalisation of \cite[Theorem~3.7]{LR:00} to number fields. The subtlety comes down to the distinction between the adelic and idelic topologies on subsets of $\Az_K^*$. The proofs for general number fields are complicated because one does not have an explicit description of $C_K$ in general, whereas $C_\Qz\cong \Rz_+^*\times\prod_{p\in\cP_\Qz} \Zz_p^*$.

It will be convenient to introduce some terminology for topologies on subsets of $\Az_K^*$ and $C_K$.
\begin{definition}
Let $A\subseteq \Az_K^*$. The \emph{idelic topology} (resp. \emph{adelic topology}) on $A$ is the subspace topology on $A$ when viewed as a subspace of $\Az_K^*$ (resp. of $\Az_K$). We shall also call the quotient topology on $C_K$ the \emph{idelic topology}.
Let ${\rm ad}$ and ${\rm id}$ denote the adelic topology and idelic topology, respectively. 
\end{definition}
Note that the idelic topology on a subset $A\subseteq\Az_K^*$ is always finer than the adelic topology. Given a topological space $(X,\tau)$ and a subset $A\subseteq X$, we let $\ol{A}^\tau$ denote the closure of $A$ in the $\tau$ topology.

\begin{proposition}
\label{prop:qipc}
For $A\subseteq 2^{\Sigma_K}$, we have 
\[
\ol{A}^{\rm qo}=\begin{cases}
\ol{A}^{\rm pc} & \text{ if } \ol{A}^{\rm pc} \subsetneqq 2^{\Sigma_K},\\
2^{\Sigma_K}\sqcup C_K & \text{ if } \ol{A}^{\rm pc} = 2^{\Sigma_K}.
\end{cases}
\]
In particular, the relative topology on $2^{\Sigma_K}$ as a subspace of $2^{\Sigma_K}\sqcup C_K$ is the same as the power-cofinite topology.
\end{proposition}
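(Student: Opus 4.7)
The plan is to leverage the fact that $\Xi$ is a continuous open surjection (remarked just before the statement): because $\ol{\Xi^{-1}(A)}$ is closed and $K^*$-invariant, hence saturated for $\Xi$, the identity $\ol{A}^{\rm qo}=\Xi(\ol{\Xi^{-1}(A)})$ holds. Since $\Xi^{-1}(A)=\{b\in\Az_K\setminus\Az_K^*:\cZ(b)\in A\}$, everything reduces to analysing the adele-topology closure of such subsets.

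First, for a basic restricted-product open $V=\prod_{v\in S}V_v\times\prod_{v\notin S}\cO_{K,v}$, with $S\subseteq\Sigma_K$ finite and $V_v\subseteq K_v$ open, I would set $F(V):=\{v\in S:0\notin V_v\}$ and verify
\[
\bigl\{\cZ(b):b\in V\cap(\Az_K\setminus\Az_K^*)\bigr\}=U_{F(V)}.
\]
The nontrivial inclusion $\supseteq$ is constructive: for $T\in U_{F(V)}$, put $b_v=0$ for $v\in T$ (possible at $v\in S$ since $v\notin F(V)$ forces $0\in V_v$, and at $v\notin S$ since $0\in\cO_{K,v}$) and $b_v\neq 0$ otherwise; in the edge case $T=\emptyset$ one additionally enforces $b\notin\Az_K^*$ by taking $b_v$ to be a uniformiser at infinitely many finite $v\notin S$, which is available because a number field has infinitely many finite places.

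The specialisation $S=F$, $V_v=K_v\setminus\{0\}$ for $v\in F$, shows every $U_F$ arises as $\Xi(V)\cap 2^{\Sigma_K}$, so the displayed equality yields that the relative topology on $2^{\Sigma_K}\subseteq(2^{\Sigma_K}\sqcup C_K,\,{\rm qo})$ coincides with the power-cofinite topology; this proves the last sentence and gives $\ol{A}^{\rm qo}\cap 2^{\Sigma_K}=\ol{A}^{\rm pc}$. It remains to decide whether $\ol{A}^{\rm qo}$ contains points of $C_K$. If $\ol{A}^{\rm pc}\subsetneq 2^{\Sigma_K}$, choose a finite $F_0$ with $A\cap U_{F_0}=\emptyset$; for any $c\in C_K$ and $a\in c\subseteq\Az_K^*$, take $S\supseteq F_0\cup\{v:a_v\notin\cO_{K,v}\}$, and put $V_v=K_v\setminus\{0\}$ for $v\in F_0$, $V_v=K_v$ for $v\in S\setminus F_0$. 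Then $a\in V$, $F(V)=F_0$, and the displayed equality forces $\Xi(V)\cap A\subseteq U_{F_0}\cap A=\emptyset$, so $c\notin\ol{A}^{\rm qo}$. Conversely, if $\ol{A}^{\rm pc}=2^{\Sigma_K}$, then for any $c\in C_K$, $a\in c$, and any basic $V\ni a$, density provides $T\in A\cap U_{F(V)}$ and the construction above produces $b\in V\cap\Xi^{-1}(A)$; hence $C_K\subseteq\ol{A}^{\rm qo}$, giving $\ol{A}^{\rm qo}=2^{\Sigma_K}\sqcup C_K$.

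The main bookkeeping hurdle is the boundary case $T=\emptyset$ and, more generally, the fact that adelic neighbourhoods of $a\in\Az_K^*$ can contain $0$ at many coordinates (unlike idelic ones); this is precisely what causes $C_K$ to collapse into the closure as soon as $A$ is power-cofinite dense in $2^{\Sigma_K}$.
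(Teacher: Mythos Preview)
Your proof is correct and hinges on the same key computation as the paper's: for a basic restricted-product open $V$, one has $\Xi(V)\cap 2^{\Sigma_K}=U_{F(V)}$ with $F(V)=\{v\in S:0\notin V_v\}$. The only notable difference is in how the two cases are dispatched. For $\ol{A}^{\rm pc}=2^{\Sigma_K}$, the paper observes once that $\{\emptyset\}$ is dense in $2^{\Sigma_K}\sqcup C_K$ (since $\emptyset\in\Xi(V)$ for every nonempty basic $V$), so as soon as $\emptyset\in\ol{A}^{\rm pc}\subseteq\ol{A}^{\rm qo}$ one gets the whole space; you instead construct, for each $c\in C_K$ and each basic neighbourhood, an explicit non-idelic adele witnessing $c\in\ol{A}^{\rm qo}$. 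For $\ol{A}^{\rm pc}\subsetneq 2^{\Sigma_K}$, the paper shows that $2^{\Sigma_K}\setminus U_F$ is qo-closed by exhibiting its complement as $\Xi\bigl(\prod_F K_v^*\times\prod_{F^c}\cO_{K,v}\bigr)=U_F\sqcup C_K$ (this last equality quietly uses that every idele class has a representative integral outside $F$), whereas you avoid that number-theoretic step by directly producing, for each $c\in C_K$, a basic neighbourhood $\Xi(V)$ with $\Xi(V)\cap A\subseteq U_{F_0}\cap A=\emptyset$. Both routes are short; the paper's is marginally slicker in the dense case, yours marginally more self-contained in the non-dense case.
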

\begin{proof}
The proof is essentially the same as the case $K=\Qz$ from \cite[Theorem~3.7]{LR:00}. First, we show that the inclusion map $(2^{\Sigma_K},\, {\rm pc}) \to (2^{\Sigma_K}\sqcup C_K,\, {\rm qo})$ is continuous.  
It suffices to show that for any basic open set of $\Az_K$ of the form $U = \prod_{\Sigma_K} U_v$, where $U_v$ is an open set of $K_v$ and $U_v= \cO_{K,v}$ for all but finitely many $v \in \Sigma_K$, the set $\Xi(U) \cap 2^{\Sigma_K}$ is power-cofinite open. In fact, by choosing appropriate non-invertible adeles from $U$, we see that $\Xi(U) \cap 2^{\Sigma_K} = U_F$, where $F=\{v \in \Sigma_K \colon 0 \not\in U_v\}$. 
Note that this argument also shows both that $\emptyset \in \Xi(U)$ for such $U$, which implies that $\{\emptyset\}$ is dense in $2^{\Sigma_K}\sqcup C_K$, and that the identity map $(2^{\Sigma_K},\, {\rm pc}) \to (2^{\Sigma_K},\, {\rm qo})$ is an open map and hence is a homeomorphism, which settles the last claim.

Let $A\subseteq 2^{\Sigma_K}$. 
For a non-empty finite subset $F$ of $\Sigma_K$, the set $2^{\Sigma_K} \setminus U_F$ is closed in $2^{\Sigma_K}\sqcup C_K$, since $\Xi(U) = U_F \sqcup C_K$ for $U = \prod_F K_v^* \times \prod_{F^c} \cO_{K,v}$. Hence, we have $\ol{A}^{\rm qo} = \ol{A}^{\rm pc}$ if $\ol{A}^{\rm pc} \neq 2^{\Sigma_K}$, since $A$ is contained in $2^{\Sigma_K} \setminus U_F$ for some non-empty finite $F \subseteq \Sigma_K$. 
In addition, we have $\ol{A}^{\rm qo} = 2^{\Sigma_K}\sqcup C_K$ if $\ol{A}^{\rm pc} = 2^{\Sigma_K}$, since $\emptyset \in \ol{A}^{\rm pc} \subseteq \ol{A}^{\rm qo}$ by the continuity of $(2^{\Sigma_K},\, {\rm pc}) \to (2^{\Sigma_K}\sqcup C_K,\, {\rm qo})$.  
\end{proof}

Let $||\cdot||\colon \Az_K\to [0,\infty)$ denote the adelic norm given by $||a||:=\prod_{v\in\Sigma_K}|a|_v$. The restriction of $||\cdot||$ to $\Az_K^*$ is the usual idelic norm, which is continuous with respect to the idelic topology. We also write $||\cdot||$ for the induced norm map $C_K\to \Rz_+^*$.

\begin{definition}
For each $\eps>0$, let 
\[
\Az_K^{\eps}:=\{a\in\Az_K : ||a||\geq \eps\}.
\]
\end{definition}

\begin{remark}
  For $a\in\Az_K$, we have $||a||>0$ if and only if $a\in\Az_K^*$, so $\Az_K^{\eps}\subseteq\Az_K^*$ for every $\eps>0$. 
\end{remark}

We shall need the following modification of a standard number-theoretic fact:

\begin{lemma}[cf. {\cite[p.69~and~p.70]{CF}}]\label{lem:coincide}
For each $\eps>0$, the idelic and adelic topologies on $\Az_K^{\eps}$ coincide. 
\end{lemma}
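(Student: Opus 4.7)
The plan is to prove the non-trivial direction, namely that every idelic-open subset of $\Az_K^\eps$ is adelic-open; the reverse inclusion of topologies is automatic since $\cO_{K,v}^*\subseteq\cO_{K,v}$ makes the idelic topology on $\Az_K^*$ finer than the adelic one. Fix $a \in \Az_K^\eps$ and an idelic basic open neighborhood
\[
V = \prod_{v\in S} U_v \,\times \prod_{v \notin S} \cO_{K,v}^*,
\]
where $S\subseteq\Sigma_K$ is finite, contains $\Sigma_{K,\infty}$ and every finite $v$ with $a_v\notin\cO_{K,v}^*$, and $U_v$ is an open neighborhood of $a_v$ in $K_v^*$ for $v\in S$. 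The goal is to construct an adelic open neighborhood $W$ of $a$ with $W\cap\Az_K^\eps\subseteq V$.

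First, I would shrink each $U_v$ so that $|\cdot|_v$ is uniformly bounded on $U_v$ by some $M_v>0$: for finite $v\in S$, this is automatic on a small enough ball around $a_v$, where $|\cdot|_v$ is constantly $|a_v|_v$; for infinite $v$, one shrinks $U_v$ to a bounded open set not containing $0$. Set $C:=\prod_{v\in S}M_v$. The key heuristic is that for any candidate $b\in W$ the product $\prod_{v\in S}|b_v|_v$ is at most $C$, while $\prod_{v\notin S}|b_v|_v\leq 1$, so the constraint $\|b\|\geq\eps$ forces non-unit coordinates outside $S$ to concentrate only at places with small residue field.

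Next, enlarge $S$ to
\[
S' := S \cup \bigl\{v\in\Sigma_{K,f}\setminus S : q_v \leq C/\eps\bigr\},
\]
where $q_v:=|\cO_{K,v}/\p_v|$; this is finite since only finitely many residue characteristics (and their prime-ideal divisors) lie below any given bound. Using that $\cO_{K,v}^*$ is open in $K_v$ for finite $v$ (as $K_v$ is non-archimedean), I can define the basic adelic open set
\[
W := \prod_{v\in S} U_v \,\times\, \prod_{v\in S'\setminus S}\cO_{K,v}^* \,\times\, \prod_{v\notin S'} \cO_{K,v},
\]
which contains $a$ (note $a_v\in\cO_{K,v}^*$ for $v\in S'\setminus S$ since these $v$ are not in the original $S$). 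To check $W\cap\Az_K^\eps\subseteq V$, fix $b\in W\cap\Az_K^\eps$; the only non-trivial containment to verify is $b_v\in\cO_{K,v}^*$ for $v\notin S'$. If some such $v$ had $b_v\in\cO_{K,v}\setminus\cO_{K,v}^*$, then $|b_v|_v\leq q_v^{-1}<\eps/C$, and combining with $\prod_{w\in S}|b_w|_w\leq C$ and $|b_w|_w\leq 1$ for $w\notin S\cup\{v\}$ would yield $\|b\|<\eps$, contradicting $b\in\Az_K^\eps$.

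The main obstacle is the careful bookkeeping that produces the enlargement $S'$ and the simultaneous use of two features of non-archimedean places: that $\cO_{K,v}^*$ is open in $K_v$, and that $|b_v|_v\leq q_v^{-1}$ for non-unit integral $b_v$. Once these are in hand, the verification of $W\cap\Az_K^\eps\subseteq V$ is a direct application of the product formula for $\|\cdot\|$.
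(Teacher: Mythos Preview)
Your proof is correct and follows essentially the same route as the paper: both arguments enlarge the finite set of places to absorb all finite $v$ with small residue field (the paper's condition $\|a\|<\eps N(\p)$ for $\p\notin F$ is your $q_v>C/\eps$ for $v\notin S'$), and then use the constraint $\|b\|\geq\eps$ to force $|b_v|_v=1$ at the remaining finite places. The only cosmetic difference is that the paper shrinks the balls $B(a_v,\delta_v)$ so that $\prod_{v\in F}(|a_v|_v+\delta_v')<\|a\|+\delta$, whereas you phrase the same step as bounding $|\cdot|_v$ on $U_v$ by $M_v$ and setting $C=\prod_{v\in S}M_v$; your formulation is slightly cleaner but the content is identical.
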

\begin{proof} 
Since the idelic topology is finer, we only need to show that given an $a\in \Az_K^{\eps}$ and an open set $V\subseteq \Az_K^*$ in the idelic topology with $a\in V$, that there exists an open set $U\subseteq \Az_K$ in the adelic topology such that $a\in U$ and $U\cap \Az_K^{\eps}\subseteq V\cap \Az_K^{\eps}$.
Fix $a\in \Az_K^{\eps}$, and let $V\subseteq \Az_K^*$ be an open set in the idelic topology with $a\in V$. We may assume that $V$ is of the form
\[
V=\prod_{v\in F}B(a_v,\delta_v)\times \prod_{\p\in \cP_K\setminus F}\O_\p^*,
\]
where $F\subseteq \Sigma_K$ is a finite subset such that $\Sigma_{K,\infty}\subseteq F$, $\delta_v\in\Rz_{>0}$, and $B(a_v,\delta_v):=\{x\in K_v:|x-a_v|_v<\delta_v\}$. 
Moreover, we may assume that $F$ is large enough so that $||a||<\eps N(\p)$ for all $\p\in \cP_K\setminus F$. Choose $\delta>0$ such that $||a||+\delta < \varepsilon N(\p)$ for all $\p\in \cP_K\setminus F$. Now choose $\delta_v'\in\Rz_{>0}$ for $v\in F$ small enough so that $\prod_{v\in F}(|a_v|_v+\delta_v')<\left(\prod_{v\in F}|a_v|_v\right)+\delta=||a||+\delta$ and $\delta_v' < \delta_v$.
Put 
\[
U:=\prod_{v\in F}B(a_v,\delta_v')\times \prod_{\p\in \cP_K\setminus F}\O_\p.
\]
Then, $U$ is open in the adelic topology and $a\in U$. Let $b\in U\cap \Az_K^{\eps}$. Then,
\[
\eps\leq ||b||< \Bigg(\prod_{v\in F}(|a_v|_v+\delta_v')\Bigg)\Bigg(\prod_{\p\in\cP_K\setminus F}|b_\p|_\p\Bigg)\leq (||a||+\delta) \Bigg(\prod_{\p\in\cP_K\setminus F}|b_\p|_\p\Bigg).
\]
Since $|b_\p|_\p\leq 1$ for all $\p\in \cP_K\setminus F$, it follows that $\eps<(||a||+\delta)|b_\p|_\p$ for all $\p\in \cP_K\setminus F$. Now we have 
\[
\frac{1}{N(\p)}<\frac{\eps}{||a||+\delta}<|b_\p|_\p\leq 1 \quad (\p\in \cP_K\setminus F),
\]
so that $|b_\p|_\p=1$ for all $\p\in \cP_K\setminus F$. Thus, $b\in V$. This shows that $U\cap \Az_K^{\eps}\subseteq V\cap\Az_K^{\eps}$.
\end{proof}

Continuity of the idelic norm map implies that $\Az_K^{\eps}$ is closed in $\Az_K^*$. We will need the following stronger statement, which was proven in \cite[Lemma~3.5]{LR:00} for the case $K=\Qz$. 

\begin{lemma}
 \label{lem:usc}
For each $\eps>0$, $\Az_K^{\eps}$ is closed in $\Az_K$ (that is, the map $||\cdot||\colon \Az_K\to [0,\infty)$ is upper semi-continuous). 
\end{lemma}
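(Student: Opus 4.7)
The goal is to show that the complement $\{a\in\Az_K : ||a||<\eps\}$ is open, i.e., that for every $a\in\Az_K$ with $||a||<\eps$ one can produce a basic adelic neighbourhood $U$ of $a$ on which the norm stays below $\eps$. The strategy is to reduce the infinite product $\prod_v|b_v|_v$ to a genuine finite product by exploiting that in a basic adelic neighbourhood $U=\prod_{v\in F}U_v\times\prod_{\p\notin F}\cO_{K,\p}$ one automatically has $|b_\p|_\p\le 1$ for $\p\notin F$, so $||b||\le\prod_{v\in F}|b_v|_v$, and the latter is continuous on $\prod_{v\in F}K_v$.

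First I would fix $a\in\Az_K$ with $||a||<\eps$ and split into two cases depending on whether $a$ is an idele. If $a\in\Az_K^*$, then by definition of the restricted product $\Az_K^*=\prod_v'(K_v^*,\cO_{K,v}^*)$ one has $|a_\p|_\p=1$ for all but finitely many $\p$; I would choose a finite set $F\subseteq\Sigma_K$ containing $\Sigma_{K,\infty}$ together with all finite places $\p$ where $|a_\p|_\p\neq 1$. Then $||a||=\prod_{v\in F}|a_v|_v$ exactly. If instead $a\in\Az_K\setminus\Az_K^*$, pick some $v_0\in\cZ(a)$ and let $F$ be a finite set containing $v_0$, all infinite places, and all finite $\p$ with $|a_\p|_\p>1$; in this case $\prod_{v\in F}|a_v|_v=0<\eps$ and $|a_\p|_\p\le 1$ for $\p\notin F$.

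In both cases, continuity of each $|\cdot|_v\colon K_v\to[0,\infty)$ (and hence of the finite product $\prod_{v\in F}|\cdot|_v$ on $\prod_{v\in F}K_v$) provides open neighbourhoods $U_v\ni a_v$ for $v\in F$ such that $\prod_{v\in F}|b_v|_v<\eps$ whenever $b_v\in U_v$ for all $v\in F$. Setting $U:=\prod_{v\in F}U_v\times\prod_{\p\notin F}\cO_{K,\p}$ gives an open neighbourhood of $a$ in the adelic topology, and for any $b\in U$ we have $|b_\p|_\p\le 1$ for every $\p\notin F$, hence
\[
||b||\;=\;\Big(\prod_{v\in F}|b_v|_v\Big)\Big(\prod_{\p\notin F}|b_\p|_\p\Big)\;\le\;\prod_{v\in F}|b_v|_v\;<\;\eps,
\]
which is exactly what is needed.

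The only conceptually delicate point is the idelic case: one must recognise that membership in $\Az_K^*$ forces $|a_\p|_\p=1$ cofinitely, so that the product $||a||$ genuinely reduces to the finite product over $F$. Without this observation one cannot compare $||a||$ to $\prod_{v\in F}|a_v|_v$, because an adele that is nowhere zero could in principle have $|a_\p|_\p<1$ for infinitely many $\p$ (making $||a||=0$ while the finite product is far from $0$). Once that is in hand, the rest is a straightforward continuity argument, and no Strong Approximation or further number-theoretic input beyond the definition of the restricted product is required.
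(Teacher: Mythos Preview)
Your case split has a gap: an adele $a\in\Az_K\setminus\Az_K^*$ need not satisfy $\cZ(a)\neq\emptyset$. Indeed---as you yourself observe in the final paragraph---a nowhere-zero adele can have $|a_\p|_\p<1$ for infinitely many $\p$, which places it outside $\Az_K^*$ (membership in the restricted product $\prod_v'(K_v^*,\cO_{K,v}^*)$ requires $|a_\p|_\p=1$ cofinitely) while $\cZ(a)=\emptyset$. For such $a$ your Case~2 instruction ``pick some $v_0\in\cZ(a)$'' fails, and this sub-case is never treated. The repair is easy: since $||a||=0$ here, enlarge $F$ (starting from $\Sigma_{K,\infty}$ together with all $\p$ where $|a_\p|_\p>1$) by adjoining finitely many further primes with $|a_\p|_\p<1$ until the partial product $\prod_{v\in F}|a_v|_v$ drops below $\eps$; the rest of your argument then goes through verbatim.

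The paper's proof avoids the case split altogether by noting that on each basic open set $W=\prod_{v\in S}U_v\times\prod_{\p\notin S}\cO_{K,\p}$ the adele norm equals the pointwise infimum of the continuous partial products $||\cdot||_F:=\prod_{v\in F}|\cdot|_v$ over finite $F\supseteq S\cup\Sigma_{K,\infty}$, and an infimum of continuous functions is automatically upper semi-continuous. This is the same core observation you use (namely $||b||\le\prod_{v\in F}|b_v|_v$ once $|b_\p|_\p\le 1$ outside $F$), but packaged so that no case analysis---and hence no missing case---arises.
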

\begin{proof}

The proof is essentially the same as \cite[Lemma~3.5]{LR:00}, so we give only a brief sketch:
On each basic open set, we see that the adele norm is the infimum of continuous functions $||\cdot||_F\colon \Az_K\to [0,\infty)$ defined by $||a||_F:=\prod_{v\in F}|a|_v$ for $a \in \Az_K$ and large finite subsets $F \subseteq \Sigma_K$. 
\end{proof}

The next lemma is the key technical observation in this section; it is a generalisation of an argument in the proof of \cite[Theorem~3.7]{LR:00}.

\begin{lemma}
\label{lem:oinCL}
Let $p$ be a rational prime. Suppose we have a sequence of ideles $a_k\in\Az_K^*$ and a strictly increasing sequence of positive integers $n_k$ such that $||a_k||^{1/d}\in (p^{-(n_k+1)},p^{-n_k}]$ for all $k\in\Zz_{>0}$, where $d:=[K:\Qz]$.
Then, there exists a sequence of $x_k\in K^*$ such that the sequence $b_k:=p^{n_k+1}x_ka_k$ has a convergent subsequence (in the topology of $\Az_K$) with limit $b\in \Az_K$ satisfying $\cZ(b)=\{\p\in\cP_K : p\in \p\}$.
\end{lemma}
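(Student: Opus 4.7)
The plan is to exploit compactness of the norm-$1$ idele class group to find the $x_k$'s that normalise $a_k$ up to a fixed compact set, after first absorbing the decay of $||a_k||$ into a suitable ``adelic $d$-th root''. Multiplication by elements of $K^*$ preserves the idelic norm, so $||b_k||=||a_k||\to 0$; any limit $b$ therefore lies in $\Az_K\setminus\Az_K^*$ by upper semi-continuity of $||\cdot||$ (Lemma~\ref{lem:usc}).

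For $t>0$, I would define $\gamma(t)\in\Az_K^*$ by $\gamma(t)_v:=t^{1/d}$ (the positive real $d$-th root, viewed inside $K_v$) at each infinite $v$ and $\gamma(t)_v:=1$ at each finite $v$. Since $|t^{1/d}|_v=t^{d_v/d}$ where $d_v:=[K_v:\Rz]$, and $\sum_{v\mid\infty}d_v=d$, one computes $||\gamma(t)||=t$. Put $\gamma_k:=\gamma(||a_k||)$, so that $a_k\gamma_k^{-1}$ lies in the norm-$1$ subgroup $\Az_K^1:=\{a\in\Az_K^*:||a||=1\}$. Since $\Az_K^1/K^*$ is compact (Fujisaki's lemma), there exists a compact set $\cF\subseteq\Az_K^1$ with $K^*\cdot\cF=\Az_K^1$; pick $x_k\in K^*$ with $u_k:=x_ka_k\gamma_k^{-1}\in\cF$. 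Then $b_k=(p^{n_k+1}\gamma_k)u_k$.

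The key step is a place-by-place analysis of $b_k$. At each infinite $v$, the hypothesis $p^{n_k+1}||a_k||^{1/d}\in(1,p]$ yields $|(p^{n_k+1}\gamma_k)_v|_v\in(1,p^{d_v}]$, so compactness of $\cF\subseteq\Az_K^*$ confines $|(b_k)_v|_v$ to a fixed compact subset of $\Rz_+^*$. At each $\p\mid p$, $|(p^{n_k+1})_\p|_\p\to 0$ while the remaining factors are bounded, giving $|(b_k)_\p|_\p\to 0$. At each $\p\nmid p$, $|(p^{n_k+1})_\p|_\p=|\gamma_{k,\p}|_\p=1$, hence $|(b_k)_\p|_\p=|u_{k,\p}|_\p$ is bounded both above and away from zero; moreover $u_{k,\p}\in\O_{K,\p}^*$ for every $\p$ outside a finite set determined by $\cF$ (a property of compact subsets of the restricted product $\Az_K^*$). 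Therefore $(b_k)$ is confined to a compact subset of $\Az_K$, so a subsequence converges to some $b\in\Az_K$, and the componentwise bounds then force $b_\p=0$ iff $\p\mid p$, i.e., $\cZ(b)=\{\p\in\cP_K:p\in\p\}$.

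The main technical subtlety is the design of $\gamma_k$: naively concentrating all the mass $||a_k||$ at a single Archimedean component fails as soon as $d>1$, since then $p^{n_k+1}$ times that component would tend to $0$ at infinity. Spreading the mass uniformly through the adelic $d$-th root is exactly what cancels the growth of $p^{n_k+1}$ at the Archimedean places and keeps those components in $(1,p^{d_v}]$; this is the key modification needed to adapt the Laca-Raeburn argument \cite{LR:00} from $\Qz$ to a general number field.
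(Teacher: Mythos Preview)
Your proof is correct and follows essentially the same strategy as the paper: both introduce the same ``adelic $d$-th root'' idele concentrated at the Archimedean places (the paper's $t_k$ is your $\gamma_k$), reduce to norm-$1$ ideles, choose $x_k\in K^*$ to bring the norm-$1$ part into a fixed compact set, and then analyze $p^{n_k+1}x_ka_k$ place by place. The only difference is in how the $x_k$ are produced: you invoke Fujisaki's lemma and an abstract compact fundamental set $\cF\subseteq\Az_K^1$, whereas the paper applies directly the Minkowski-type lemma from \cite[p.66]{CF} (which underlies Fujisaki's lemma) to a scaled idele $ua_k'$ to obtain explicit uniform bounds $1\leq |x_kua_k'|_v\leq r$ at every place.
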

\begin{proof}
First, we need a general observation, which is inspired by the proofs on \cite[p.143]{Lang}. By \cite[p.66]{CF}, there exists a constant $c\geq 1$, depending only on $K$, such that for every $a\in\Az_K^*$ with $||a||>c$, there exists $x\in K^*$ with $1\leq |xa|_v$ for all $v\in\Sigma_K$; this implies that we also have
\[
|xa|_v=\frac{||xa||}{\prod_{w\neq v}|xa|_w}\leq ||xa||=||a||\quad (v\in\Sigma_K).
\]
For each $k\in\Zz_{>0}$, define $t_k\in\Az_K^*$ by $(t_k)_\infty:=(||a_k||^{1/d},...,||a_k||^{1/d})\in\prod_{v\in\Sigma_{K,\infty}}K_v^*$ and $(t_k)_f:=(1,1,1,...)\in\Az_{K,f}^*$. Then, $||t_k||=||a_k||$. Put $a_k':=t_k^{-1}a_k$, so that $a_k=t_ka_k'$ and $||a_k'||=1$. Choose $r>c$ and define $u\in\Az_K^*$ by $u_\infty:=(r^{1/d},...,r^{1/d})\in \prod_{v\in\Sigma_{K,\infty}}K_v^*$ and $u_f:=(1,1,1,...)\in\Az_{K,f}^*$. Then, for every $k\in\Zz_{>0}$, we have that $ua_k'\in\Az_K^*$ satisfies $||ua_k'||=||u||=r>c$, so that there exists $x_k\in K^*$ with $1\leq |x_kua_k'|_v\leq r$ for all $v\in\Sigma_K$. For $\p\in\cP_K$, $|x_kua_k'|_\p$ is of the form $N(\p)^l$ for $l\in\Zz_{\geq 0}$, so for all $\p$ with $N(\p)>r$, we must have $|x_kua_k'|_\p=1$. Hence, there exists a finite set $S\subseteq \Sigma_K$ with $\Sigma_{K,\infty}\subseteq S$ such that 
\begin{equation}
    \label{eqn:prodannulus}
x_kua_k'\in \prod_{v\in S}\{x\in K_v^* : 1\leq |x|_v\leq r\}\times\prod_{\p\notin S}\O_\p^*
\end{equation}
for all $k$.
Since $(t_kp^{n_k+1})_\infty\in (1,p]^{\Sigma_{K,\infty}}$ and $p^{n_k+1}\in\O_\p^*$ for all primes $\p$ with $p\notin\p$, we see that $t_kx_kp^{n_k+1}a_k'=x_kp^{n_k+1}a_k$ lies in the set
\[
u^{-1}\left(\prod_{v\in S\setminus S_p}\{x\in K_v : 1\leq |x|_v\leq p^2r\}\times \prod_{\p\in S_p}\O_\p\times\prod_{\p\notin S\cup S_p}\O_\p^*\right)
\]
for all $k$ such that $\max_{\p\in S_p}|p^{n_k+1}|_\p < 1/r$, where $S_p:=\{\p\in\cP_K : p\in \p\}$. Since this set is compact in $\Az_K$, $x_kp^{n_k+1}a_k$ has a convergent subsequence in the topology of $\Az_K$, converging to some $b\in\Az_K$. Moreover, we have $\cZ(b)=S_p$ because $r^{-2/d}\leq |x_kp^{n_k+1}a_k|_v$ for all $v\not\in S_p$ and $|x_kp^{n_k+1}a_k|_\p\leq |x_ka_k|_\p N(\p)^{-(n_k+1)}\leq N(\p)^{-(n_k+1)}r$ for all $\p\in S_p$. Here, we used that $|x_kua_k'|_v\leq r$ for every $v\in \Sigma_K$ by \eqref{eqn:prodannulus} and $|x_kua_k'|_\p=|x_ka_k|_\p$ for all $\p\in\cP_K$ since $ua_k'$ and $a_k$ have the same finite part: $(ua_k')_f=(a_k)_f$.
\end{proof}

The following proposition completes our description of the topology on $2^{\Sigma_K}\sqcup C_K$. 

\begin{proposition} \label{prop:qick}
 For $B\subseteq C_K$, we have 
\[
\ol{B}^{\rm qo}=
\begin{cases}
 \ol{B}^{\id}   & \text{ if } 0\notin \overline{\{||b|| : b\in B\}},\\
2^{\Sigma_K}\sqcup C_K & \text{ if } 0\in \overline{\{||b|| : b\in B\}}.
\end{cases}
\]
\end{proposition}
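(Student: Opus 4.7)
The plan is to translate the statement to $\Az_K$ using the quasi-orbit map $\Xi$. Since $\Xi\colon\Az_K\to 2^{\Sigma_K}\sqcup C_K$ is an open continuous surjection, $\ol{B}^{\rm qo} = \Xi(\ol{\Xi^{-1}(B)}^{\Az_K})$ with the inner closure in the adelic topology; and since the restriction $\Xi|_{\Az_K^*}\colon\Az_K^*\to C_K$ is the (open continuous) quotient map for the idelic topology, $\ol{B}^{\rm id} = \Xi(\ol{\Xi^{-1}(B)}^{\Az_K^*})$ with the inner closure in the idelic topology. Note that $\Xi^{-1}(B)\subseteq \Az_K^*$ is already $K^*$-invariant.

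For the first case, suppose $\eps:=\inf_{b\in B}\|b\|>0$, so that $\Xi^{-1}(B)\subseteq \Az_K^{\eps}$. Lemma~\ref{lem:usc} makes $\Az_K^{\eps}$ adelically closed in $\Az_K$, so the adelic closure of $\Xi^{-1}(B)$ stays inside $\Az_K^{\eps}\subseteq \Az_K^*$. Continuity of the idelic norm also makes $\Az_K^{\eps}$ closed in $\Az_K^*$, and Lemma~\ref{lem:coincide} identifies the adelic and idelic topologies on $\Az_K^{\eps}$. Consequently, the two closures of $\Xi^{-1}(B)$ agree as subsets of $\Az_K^*$, and applying $\Xi$ yields $\ol{B}^{\rm qo}=\ol{B}^{\rm id}$.

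For the second case, fix a sequence $(b_k)$ in $B$ with $\|b_k\|\to 0$ and lift to ideles $\tilde b_k\in\Az_K^*$. For each rational prime $p$, pass to a subsequence so that $\|\tilde b_k\|^{1/d}\in (p^{-(n_k+1)},p^{-n_k}]$ with $n_k\in\Zz_{>0}$ strictly increasing; Lemma~\ref{lem:oinCL} then provides $x_k\in K^*$ and a further subsequence along which $p^{n_k+1}x_k\tilde b_k\to b$ in $\Az_K$ with $\cZ(b)=S_p:=\{\p\in\cP_K:p\in\p\}$. Since $p^{n_k+1}x_k\tilde b_k$ lies in $K^*\tilde b_k\subseteq \Xi^{-1}(B)$, this shows $S_p\in\ol{B}^{\rm qo}$ for every rational prime $p$. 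The family $\{S_p\}_p$ is pc-dense in $2^{\Sigma_K}$: any basic pc-open $U_F$ contains $S_p$ whenever $p$ does not lie below any prime of $F$, which rules out only finitely many $p$. Proposition~\ref{prop:qipc} then upgrades pc-density to qo-density in $2^{\Sigma_K}\sqcup C_K$, giving $\ol{B}^{\rm qo}=2^{\Sigma_K}\sqcup C_K$.

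The hardest step is Case~2: Lemma~\ref{lem:oinCL} is precisely what converts the idelic hypothesis $\|b_k\|\to 0$ into an adelic statement, by combining the scaling $p^{n_k+1}$ with Dirichlet-type $K^*$-representatives $x_k$ to pin the adelic limit to have zero set exactly $S_p$. This step is essential because the points $S_p$ live only on the non-invertible side of $\Xi$, and so they can be reached only by genuine adelic convergence, not by idelic convergence.
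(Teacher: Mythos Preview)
Your proof is correct and follows essentially the same strategy as the paper's. Both cases use Lemma~\ref{lem:usc} and Lemma~\ref{lem:coincide} for the first case and Lemma~\ref{lem:oinCL} together with Proposition~\ref{prop:qipc} for the second; the only cosmetic difference is that the paper phrases Case~1 as showing that the identity $(C_K^\eps,{\rm id})\to(C_K^\eps,{\rm qo})$ is a homeomorphism, whereas you work directly with the preimage closures in $\Az_K$ and $\Az_K^*$, and in Case~2 the paper fixes $F$ and chooses a single prime $p$ avoiding it, while you show $S_p\in\ol{B}^{\rm qo}$ for every $p$ and then invoke pc-density of $\{S_p\}$.
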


Note that Proposition~\ref{prop:qick} implies that the relative topology of $C_K$ with respect to the topology of $2^{\Sigma_K} \sqcup C_K$ is different from the idelic topology.

\begin{proof}
Let $\eps>0$, let $q \colon \Az_K^* \to C_K$ be the quotient map, and put $C_K^\eps = q(\Az_K^\eps)$. 
Then, the quasi-orbit map $\Xi \colon (\Az_K,\, {\rm ad}) \to (2^{\Sigma_K} \sqcup C_K,\, {\rm qo})$ restricts to a surjective map $\xi \colon (\Az_K^\eps,\ {\rm ad}) \to (C_K^\eps,\,  {\rm qo})$. By Lemma~\ref{lem:coincide}, $\xi$ is continuous with respect to the idelic topology for $\Az_K^\eps$. In addition, since $\Az_K^\eps$ is a closed $K^*$-invariant subset of $\Az_K$ by Lemma~\ref{lem:usc}, $\xi$ is an open map. Hence, $\xi$ induces an open continuous map $\ol{\xi} \colon (C_K^\eps,\, {\rm id}) \to (C_K^\eps,\, {\rm qo})$ such that the following diagram commutes:
\[  \begin{tikzcd}
  \Az_K^\eps \arrow[d,"q"] \arrow[rr,"\xi"] & &  C_K^\eps \\
  C_K^\eps \arrow[rru,"\ol{\xi}"']. & &
  \end{tikzcd}\]
Since $\xi$ coincides with the restriction of $q$ by definition, $\ol{\xi}$ is the identity map. Hence, the quasi-orbit topology and the idelic topology for $C_K^\eps$ coincide. In particular, for $B \subseteq C_K$ with $0\notin \overline{\{||b|| : b\in B\}}$, there exists $\eps>0$ with $B \subseteq C_K^\eps$ so that we have $\ol{B}^{\id}=\ol{B}^{\rm qo}$.  

Now assume $0\in \overline{\{||b|| : b\in B\}}$. Let $F\subseteq\Sigma_K$ be any finite subset. Choose a rational prime $p$ such that $F\cap\{\p\in\cP_K : p\in \p\}=\emptyset$.
Then, there exists a strictly increasing sequence of natural numbers $n_1<n_2<\cdots$ and a sequence of ideles $a_k$ with $q(a_k)\in B$ such that $||a_k||^{1/d}\in (p^{-(n_k+1)},p^{-n_k}]$ for all $k\in\Zz_{>0}$. By Lemma~\ref{lem:oinCL}, there exists $x_k\in K^*$ such that $b_k:=p^{n_k+1}x_ka_k$ has a convergent subsequence in the adelic topology with limit $b\in \Az_K$ satisfying $\cZ(b)=\{\p\in\cP_K : p\in \p\}$. Since $\Xi$ is continuous, $\Xi(b)=\cZ(b)$ lies in $\ol{B}^{\rm qo}$. Since $\cZ(b)\cap F=\emptyset$, $\cZ(b)\in U_F$. 
Hence, $\ol{B}^{\rm qo}\cap U_F\neq\emptyset$ for every finite subset $F\subseteq \Sigma_K$. By Proposition~\ref{prop:qipc}, $2^{\Sigma_K}\cap \ol{B}^{\rm qo}$ is closed in the power-cofinite topology of $2^{\Sigma_K}$; since $2^{\Sigma_K}\cap \ol{B}^{\rm qo}$ intersects every basic open set of $2^{\Sigma_K}$ non-trivially, we have $2^{\Sigma_K}=2^{\Sigma_K}\cap \ol{B}^{\rm qo}$. Hence, $\ol{B}^{\rm qo}=2^{\Sigma_K}\sqcup C_K$ by Proposition~\ref{prop:qipc}.
\end{proof}

The topology of $2^{\Sigma_K} \sqcup C_K$ is completely determined by Proposition~\ref{prop:qipc} and Proposition~\ref{prop:qick}. However, it will be convenient to have an explicit basis of open sets for $2^{\Sigma_K} \sqcup C_K$:

\begin{theorem}
\label{thm:basicopensets}
 Let $F$ be a finite subset of $\Sigma_K$, let $\varepsilon>0$, and let $V \subset C_K$ be an open set (with respect to the usual idelic topology of $C_K)$ such that $||a||>\varepsilon$ for every $a \in V$. Then, the set $U(F,\varepsilon,V):= U_F \sqcup B_\varepsilon \sqcup V$ is an open set in $2^{\Sigma_K} \sqcup C_K$, where $B_\varepsilon = \{a \in C_K \colon ||a||<\varepsilon\}$. Moreover, $\{U(F,\varepsilon,V) \colon F,\varepsilon,V\}$ is a basis of open subsets for the quasi-orbit topology of $2^{\Sigma_K} \sqcup C_K$.

\end{theorem}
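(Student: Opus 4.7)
The plan is to establish openness of $U(F,\varepsilon,V)$ by a direct preimage computation under $\Xi$, and then to derive the basis property by combining Propositions~\ref{prop:qipc} and~\ref{prop:qick} with the coincidence of the ${\rm qo}$ and ${\rm id}$ topologies on $C_K^\varepsilon$ observed in the proof of Proposition~\ref{prop:qick}.

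For openness, I would pass to the complement of $U(F,\varepsilon,V)$ in $2^{\Sigma_K}\sqcup C_K$, namely $\{T\in 2^{\Sigma_K}: T\cap F\neq\emptyset\}\sqcup\bigl(\{c\in C_K: ||c||\geq\varepsilon\}\setminus V\bigr)$, whose $\Xi$-preimage in $\Az_K$ is
\begin{equation*}
\Bigl(\bigcup_{v\in F}\{a\in\Az_K: a_v=0\}\Bigr)\cup\bigl(\Az_K^{\varepsilon}\setminus q^{-1}(V)\bigr),
\end{equation*}
where $q\colon\Az_K^*\to C_K$ is the quotient map; here I use that invertible adeles have no zero coordinates, so the first union already absorbs all non-invertible $a$ with $\cZ(a)\cap F\neq\emptyset$. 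Each $\{a: a_v=0\}$ is ${\rm ad}$-closed, and Lemma~\ref{lem:usc} gives that $\Az_K^{\varepsilon}$ is ${\rm ad}$-closed in $\Az_K$. Lemma~\ref{lem:coincide} then says that the ${\rm id}$ and ${\rm ad}$ topologies agree on $\Az_K^{\varepsilon}$, so the ${\rm id}$-open set $q^{-1}(V)\cap\Az_K^{\varepsilon}$ is also ${\rm ad}$-open in $\Az_K^{\varepsilon}$; hence $\Az_K^{\varepsilon}\setminus q^{-1}(V)$ is ${\rm ad}$-closed in $\Az_K^{\varepsilon}$ and therefore in $\Az_K$. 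The two summands are thus ${\rm ad}$-closed, and $U(F,\varepsilon,V)$ is ${\rm qo}$-open.

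For the basis property, I would fix a ${\rm qo}$-open set $W$ and a point $x\in W$, and produce $F$, $\varepsilon$, $V$ with $x\in U(F,\varepsilon,V)\subseteq W$. Two structural facts drive the argument. First, if $W^c\cap C_K$ contained elements of arbitrarily small norm, Proposition~\ref{prop:qick} would force $\overline{W^c\cap C_K}^{\rm qo}=2^{\Sigma_K}\sqcup C_K$, and since $W^c$ is ${\rm qo}$-closed this would give $W=\emptyset$; hence some $\varepsilon_0>0$ satisfies $B_{\varepsilon_0}\subseteq W$. Second, by Proposition~\ref{prop:qipc} the subspace topology on $2^{\Sigma_K}$ is power-cofinite, and $\{\emptyset\}$ is dense in $2^{\Sigma_K}\sqcup C_K$, so $\emptyset\in W\cap 2^{\Sigma_K}$ and some finite $F\subseteq\Sigma_K$ yields $U_F\subseteq W$ (with $x\in U_F$ when $x\in 2^{\Sigma_K}$). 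If $x\in 2^{\Sigma_K}$ I set $V=\emptyset$ and $\varepsilon=\varepsilon_0$. If $x\in C_K$ I set $\varepsilon=\min(\varepsilon_0, ||x||/2)$, ensuring $B_\varepsilon\subseteq W$ and $||x||>\varepsilon$; using that the ${\rm qo}$ and ${\rm id}$ topologies on $C_K$ agree on $C_K^\varepsilon$ (from the proof of Proposition~\ref{prop:qick}), the set $V:=W\cap\{c\in C_K: ||c||>\varepsilon\}$ is ${\rm id}$-open, contains $x$, and yields $x\in U(F,\varepsilon,V)\subseteq W$.

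The principal obstacle lies in the openness step: one has to show that $\Az_K^\varepsilon\setminus q^{-1}(V)$ is ${\rm ad}$-closed even though $q^{-1}(V)$ is a priori open only in the (finer) ${\rm id}$ topology of $\Az_K^*$, and this is exactly what Lemma~\ref{lem:coincide} provides. A secondary subtlety in the basis argument is the case $x\in C_K$: one must simultaneously shrink $\varepsilon$ below $||x||$ so that $x$ lands in the halfspace $\{||c||>\varepsilon\}$ where the topologies on $C_K$ coincide, and keep $B_\varepsilon\subseteq W$.
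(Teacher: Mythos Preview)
Your proof is correct. For openness, the paper argues at the quotient level: the complement decomposes as $(2^{\Sigma_K}\setminus U_F)\sqcup\bigl(C_K\setminus(B_\varepsilon\sqcup V)\bigr)$, and the two pieces are ${\rm qo}$-closed directly by Propositions~\ref{prop:qipc} and~\ref{prop:qick}. You instead pull back along $\Xi$ and show the preimage is ${\rm ad}$-closed in $\Az_K$ using Lemmas~\ref{lem:coincide} and~\ref{lem:usc}; this effectively inlines the relevant content of those propositions, so it is more self-contained but less economical given what has already been established. For the basis property your argument is essentially the paper's: both extract $\varepsilon$ from Proposition~\ref{prop:qick} (so that $B_\varepsilon$ misses the complement), extract $F$ from Proposition~\ref{prop:qipc}, and in the case $x\in C_K$ manufacture $V$ via the coincidence of the ${\rm qo}$ and ${\rm id}$ topologies on $C_K^\varepsilon$. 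The paper organises this around the closed complement $A=A_1\cup A_2$ rather than $W$, and does not explicitly shrink $\varepsilon$ below $||x||$ (it instead takes $V$ disjoint from $B_\varepsilon\cup A_2$), but the logic is the same.
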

\begin{proof}
The set $U(F,\varepsilon,V)$ is open: We have
 \[ (U_F \sqcup B_\varepsilon \sqcup V)^c = (2^{\Sigma_K}\setminus U_F) \sqcup (C_K\setminus (B_\varepsilon \sqcup V)), \]
the set $2^{\Sigma_K}\setminus U_F$ is closed by Proposition~\ref{prop:qipc}, and the set $C_K\setminus (B_\varepsilon \sqcup V)$ is closed by Proposition \ref{prop:qick}.

Let $x \in 2^{\Sigma_K} \sqcup C_K$, and let $A \subseteq 2^{\Sigma_K} \sqcup C_K$ be a closed set that does not contain $x$. Let  
 \[ A_1 = \ol{A \cap 2^{\Sigma_K}}^{\rm qo}, \mbox{ and } A_2 = \ol{A \cap C_K}^{\rm qo}. \]
 Then, we have $A = A_1 \cup A_2$. Since $A \neq 2^{\Sigma_K} \sqcup C_K$, $A_1$ is closed in $2^{\Sigma_K}$ with respect to the power-cofinite topology by Proposition~\ref{prop:qipc}, and $A_2$ is closed in $C_K$ with respect to the idelic topology by Proposition~\ref{prop:qick}. By Proposition~\ref{prop:qick}, we can choose $\varepsilon > 0$ sufficiently small so that $B_\varepsilon \cap A_2 = \emptyset$. 
Suppose $x \in 2^{\Sigma_K}$. Then, there exists a finite set $F \subseteq \Sigma_K$ such that $x \in U_F$ and $U_F \cap A_1 = \emptyset$. Then, we have $x \in U(F,\varepsilon,\emptyset)$ and $U(F,\varepsilon,\emptyset) \cap A = \emptyset$. Now suppose $x \in C_K$. We may assume that $\eps<||x||$. Then, there exists an open set $V \subseteq C_K$ with respect to the idelic topology of $C_K$ such that $x \in V$ and $V \cap ( B_\varepsilon \cup A_2) = \emptyset$. Take $F \subseteq \Sigma_K$ with $U_F \cap A_1 = \emptyset$, so that we have $x \in U(F,\varepsilon,V)$ and $U(F,\varepsilon,V) \cap A = \emptyset$.
 Therefore, $\{U(F,\varepsilon,V) \colon F,\varepsilon,V\}$ is a basis. 
\end{proof}

We now turn to the primitive ideal space. Recall that for a topological space $X$ and $x,y \in X$, the point $y$ is said to be a generalisation of $x$ if $x$ belongs to the closure of $\{y\}$ (see \cite[p.93]{Hartshorne}). Williams's theorem \cite[Therem~8.39]{Will:book} provides a canonical continuous, open, surjective map $\varphi\colon \Prim(\A_K)\to 2^{\Sigma_K}\sqcup C_K$ (cf. \cite[Lemma~4.6]{BT}). Using this and our description of the quasi-orbit space $2^{\Sigma_K}\sqcup C_K$, we obtain the following explicit description of $\Prim(\A_K)$:

\begin{proposition} \label{prop:primA}
    We have a set-theoretic decomposition 
    \[ \Prim(\A_K) = \widehat{K^*} \sqcup (2^{\Sigma_K} \setminus \{\Sigma_K\}) \sqcup C_K.\]
    The set of closed points of $\Prim(\A_K)$ is equal to $\widehat{K^*} \sqcup C_K$. For a closed point $P$ of $\Prim(\A_K)$, $P \in C_K$ if and only if 
    no point in $\Prim(\A_K)$ other than $P$ and $\emptyset \in 2^{\Sigma_K}$ is a generalisation of $P$. 
\end{proposition}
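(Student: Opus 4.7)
My plan is to prove the three assertions in turn, using Williams's theorem from \cite[Lemma~4.6]{BT} combined with the closure computations of Propositions~\ref{prop:qipc} and~\ref{prop:qick}. For the set-theoretic decomposition, I apply Williams's theorem together with Proposition~\ref{prop:quasi-orbits}, which identifies $\cQ(\Az_K/K^*)$ with $2^{\Sigma_K}\sqcup C_K$; the fiber of $\varphi$ over a quasi-orbit is $\widehat{H}$, where $H\le K^*$ is the stabilizer of any representative. For $c\in C_K$ represented by $a\in\Az_K^*$, or for $T\in 2^{\Sigma_K}\setminus\{\Sigma_K\}$ represented by any $a$ with $\cZ(a)=T$, some coordinate $a_v$ is nonzero, forcing trivial stabilizer and a singleton fiber. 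For the quasi-orbit $\Sigma_K$, represented by $a=0$, the stabilizer is all of $K^*$, producing the factor $\widehat{K^*}$, and the decomposition follows.

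To identify the closed points, I use that a primitive ideal is closed if and only if it is maximal in the Jacobson ordering, i.e., the quotient is simple. For $c=aK^*\in C_K$, the orbit $K^*a$ is closed in $\Az_K$ by Proposition~\ref{prop:quasi-orbits}, so the natural surjection $\A_K\onto C_0(K^*a)\rtimes K^*\cong\cK(\ell^2(K^*))$ has kernel $P_c$; since $\cK$ is simple, $P_c$ is a closed point. For $\chi\in\widehat{K^*}$, composing evaluation at $0$ with $\chi$ gives a one-dimensional representation $\pi_\chi$ of $\A_K$, so its kernel $P_\chi$ is a closed point. For $T\in 2^{\Sigma_K}\setminus\{\Sigma_K\}$, set $Q_T:=\{b\in\Az_K:\cZ(b)\supseteq T\}$ (closed and $K^*$-invariant) and $W_T:=\Az_K\setminus Q_T$; any $a$ with $\cZ(a)=T$ has trivial stabilizer and dense orbit in $Q_T$ by Proposition~\ref{prop:quasi-orbits}, so the induced representation is faithful and irreducible on $C_0(Q_T)\rtimes K^*$, establishing primeness of this quotient. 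Hence $P_T:=C_0(W_T)\rtimes K^*$ is the unique primitive ideal over $T$. Choosing any $T'\supsetneq T$ (e.g., $T'=\Sigma_K$) yields $W_{T'}\supsetneq W_T$ and $P_{T'}\supsetneq P_T$, so $P_T$ is not maximal and thus not a closed point.

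For the final assertion, $Q$ is a generalisation of $P$ if and only if $P\supseteq Q$, and continuity of $\varphi$ gives $\varphi(P)\in\ol{\{\varphi(Q)\}}^{\rm qo}$. From Propositions~\ref{prop:qipc} and~\ref{prop:qick}, $\ol{\{c\}}^{\rm qo}=\{c\}$ for $c\in C_K$, $\ol{\{T\}}^{\rm qo}=\{T'\in 2^{\Sigma_K}:T'\supseteq T\}$ for $T\in 2^{\Sigma_K}\setminus\{\emptyset\}$, and $\ol{\{\emptyset\}}^{\rm qo}=2^{\Sigma_K}\sqcup C_K$. For $P=P_c$, the condition $c\in\ol{\{\varphi(Q)\}}^{\rm qo}$ forces $\varphi(Q)\in\{c,\emptyset\}$, giving $Q\in\{P_c,P_\emptyset\}$. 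For $P=P_\chi$ and any $T\in 2^{\Sigma_K}\setminus\{\emptyset,\Sigma_K\}$, every $f\in C_0(W_T)$ vanishes at $0\in Q_T$, so $f\cdot U_x\in\ker\pi_\chi$ for all $x\in K^*$; thus $P_T\subsetneq P_\chi$, and $P_\chi$ has a generalisation distinct from $P_\chi$ and $\emptyset$, completing the characterisation. The main technical step is the identification $P_T=C_0(W_T)\rtimes K^*$, which rests on the primeness argument for $C_0(Q_T)\rtimes K^*$ via a dense orbit with trivial stabilizer; once this is in hand, the remaining assertions reduce to the closure computations in $2^{\Sigma_K}\sqcup C_K$.
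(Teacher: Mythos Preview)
Your proof is correct and complete in its essentials. The approach largely parallels the paper's: both invoke Williams's theorem (via \cite[Proposition~4.13]{BT}) for the decomposition and the identification of closed points, and both use continuity of $\varphi$ together with the closure computations in Propositions~\ref{prop:qipc} and~\ref{prop:qick} to handle the case $P\in C_K$. The genuine difference lies in how you show that each $P_\chi\in\widehat{K^*}$ admits a generalisation other than $P_\chi$ and $P_\emptyset$. The paper argues topologically: by Theorem~\ref{thm:basicopensets}, any open neighbourhood of $P_\chi$ in $\Prim(\A_K)$ projects under $\varphi$ onto a set containing all of $2^{\Sigma_K}$, so every $P_T$ with $T\neq\Sigma_K$ is a generalisation. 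You instead give a direct ideal-theoretic inclusion $P_T\subsetneq P_\chi$, observing that every $f\in C_0(W_T)$ vanishes at $0$. Your argument is more concrete and avoids invoking the explicit basis of Theorem~\ref{thm:basicopensets}; the paper's argument is shorter once that basis is in hand and shows the stronger statement that \emph{every} $T\in 2^{\Sigma_K}\setminus\{\Sigma_K\}$ is a generalisation.

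One small imprecision: in showing that $P_T$ is not a closed point, you write ``e.g., $T'=\Sigma_K$'', but $P_{\Sigma_K}=C_0(\Az_K\setminus\{0\})\rtimes K^*$ is not itself primitive (the quotient is $C^*(K^*)$). The conclusion is still correct, since $P_T\subsetneq P_{\Sigma_K}\subsetneq P_\chi$ with $P_\chi$ primitive, or since $\Sigma_K$ is infinite so one can always choose $T'$ with $T\subsetneq T'\subsetneq\Sigma_K$ and then $P_{T'}$ is primitive; but you should adjust the parenthetical example accordingly.
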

Note that $\emptyset \in 2^{\Sigma_K}$ is characterised as the unique dense point in $2^{\Sigma_K} \sqcup C_K$. The topology of $\Prim(\A_K)$ is completely characterized by Theorem~\ref{thm:basicopensets}, Proposition~\ref{prop:primA}, and the fact that the canonical map $\varphi\colon \Prim (\A_K) \to 2^{\Sigma_K} \sqcup C_K$ is open and continuous.
\begin{proof}
By Proposition~\ref{prop:qipc} and Proposition~\ref{prop:qick}, the set of closed points of $2^{\Sigma_K} \sqcup C_K$ is equal to $\{\Sigma_K\} \sqcup C_K$. Hence, the same argument as the proof of \cite[Proposition~4.13]{BT} based on \cite[Theorem~8.39]{{Will:book}} yields the decomposition $\Prim(\A_K) = \widehat{K^*} \sqcup (2^{\Sigma_K} \setminus \{\Sigma_K\}) \sqcup C_K$ and the claim that the set of closed points is exactly equal to $\widehat{K^*} \sqcup C_K$.

Let $P \in \widehat{K^*} \subseteq \Prim (\A_K)$.
Then, for any open neighbourhood $U \subseteq \Prim (\A_K)$ of $P$, we have $2^{\Sigma_K} \subseteq \varphi(U)$ by Theorem~\ref{thm:basicopensets}, since the basic open set $B(F,\varepsilon,V)$ from Theorem~\ref{thm:basicopensets} contains $\Sigma_K$ if and only if $F=\emptyset$. Hence, all points in $2^{\Sigma_K} \setminus \{\Sigma_K\} \subseteq \Prim (\A_K)$ are generalisations of $P$. Conversely, let $P \in C_K \subseteq \Prim (\A_K)$ and let $x=\varphi(P) \in C_K$. If $Q \in \Prim (\A_K)$ is a generalisation of $P$ and $y=\varphi(Q)$, then $x \in \ol{\{y\}}$ by continuity of $\varphi$, which implies that $y=x \in C_K$ or $y=\emptyset \in 2^{\Sigma_K}$ by Proposition~\ref{prop:qipc} and Proposition~\ref{prop:qick}. Since $\varphi^{-1}(\varphi(Q))=\{Q\}$, we have $Q=P$ or $Q=\emptyset$. 
\end{proof}

\begin{remark}
\label{rmk:charprim}
Similarly to \cite[Remark~3.9]{LR:00}, we also have the following characterisation of primitive ideals.
If $\pi$ is an irreducible representation of $\A_K$, then
\begin{enumerate}[\upshape(i)]
\item $\ker\pi$ comes from a point in $\widehat{K^*}$ if and only if $\pi$ is one-dimensional;
\item $\ker\pi$ comes from a point in $C_K$ if and only if $\pi$ is CCR, that is, $\im\:\pi=\cK(H_\pi)$.
\end{enumerate}
\end{remark}

We briefly summarise the necessary facts on the primitive ideal space of $\B_K$. Since the quasi-orbit space of $\Gamma_K \acts \Az_K/\mu_K$ is also identified with $2^{\Sigma_K} \sqcup C_K$, the C*-algebra $\B_K$ is also a C*-algebra over $2^{\Sigma_K} \sqcup C_K$. 
\begin{definition}[cf.~{\cite[Definition~4.16]{BT}}]
\label{def:varphiK}
Let $\varphi_K \colon \B_K \to \A_K$ be the *-homomorphism obtained by applying \cite[Proposition~3.1]{BT} to $G=K^*$, $\mu=\mu_K$, $X=\Az_K$, and $\chi=1$ (the trivial character). 
\end{definition}
\begin{remark}
\label{rmk:Bk}
The statements in Proposition~\ref{prop:primA} also hold for $\B_K$ with $K^*$ replaced by $\Gamma_K$ using exactly the same arguments.
Additionally, the same argument as the proof of \cite[Proposition~4.17]{BT} implies that $\varphi_K$ is $(2^{\Sigma_K}\sqcup C_K)$-equivariant.
 \end{remark}

\section{Subquotients from places} 
\label{sec:subquotients}

In this section, we make several preparations on subquotients of $\A_K$. Define 
\[
\Upsilon\colon 2^{\Sigma_K}\sqcup C_K \to 2^{\Sigma_K},\quad \begin{cases}
 \Upsilon(T)=T &\text{ for } T\in 2^{\Sigma_K},\\
 \Upsilon(a)=\emptyset &\text{ for }a\in C_K.
 \end{cases}
\]

\begin{lemma} \label{lem:ctsopen}
 The map $\Upsilon$ is $({\rm qo},{\rm pc})$-continuous and open.
\end{lemma}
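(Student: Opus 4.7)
The plan is to verify both assertions by testing on the basic open sets: the power-cofinite basic opens $U_F$ on the target side, and the quasi-orbit basic opens $U(F,\varepsilon,V)$ from Theorem~\ref{thm:basicopensets} on the source side.

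For continuity, I would first observe that since $\emptyset \cap F = \emptyset$ for every finite $F \subseteq \Sigma_K$, we have $\emptyset \in U_F$, and therefore
\[
\Upsilon^{-1}(U_F) \;=\; U_F \sqcup C_K,
\]
because every element of $C_K$ is sent to $\emptyset$. The task reduces to showing this set is quasi-orbit open. The idea is to exhibit it as a single basic open from Theorem~\ref{thm:basicopensets}: fix any $\varepsilon>0$, let $V_\varepsilon := \{a \in C_K : \|a\| > \varepsilon/2\}$, which is idelic-open in $C_K$ and consists of elements of norm $>\varepsilon/2$, so the triple $(F,\varepsilon,V_\varepsilon)$ is admissible. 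Then $B_\varepsilon \cup V_\varepsilon = C_K$ since the two norm conditions $\|a\|<\varepsilon$ and $\|a\|>\varepsilon/2$ cover $C_K$, so $U(F,\varepsilon,V_\varepsilon) = U_F \sqcup C_K$, which is open.

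For openness, it suffices to check that $\Upsilon$ sends each basic open $U(F,\varepsilon,V) = U_F \sqcup B_\varepsilon \sqcup V$ to an open set. Since $\Upsilon$ is the identity on $2^{\Sigma_K}$ and collapses all of $C_K$ to $\emptyset \in U_F$, we get $\Upsilon(U(F,\varepsilon,V)) = U_F$, which is a basic open set in the power-cofinite topology. Hence $\Upsilon$ sends arbitrary open sets (unions of basic opens) to unions of sets $U_F$, which are open.

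I do not anticipate any real obstacle here: the main content is just the combinatorial observation that $\emptyset$ lies in every $U_F$, together with the description of basic opens supplied by Theorem~\ref{thm:basicopensets}. The only subtle point worth stating carefully is the choice of $V_\varepsilon$ ensuring that $U_F \sqcup C_K$ can itself be realized as one of the basic opens $U(F,\varepsilon,V)$, so that continuity does not require appealing to anything beyond a single basic open.
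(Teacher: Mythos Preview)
Your overall approach matches the paper's: both compute $\Upsilon^{-1}(U_F) = U_F \sqcup C_K$ for continuity and $\Upsilon(U(F,\varepsilon,V)) = U_F$ for openness, and the openness half of your argument is correct and identical to the paper's.

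There is, however, a genuine slip in the continuity argument. The triple $(F,\varepsilon,V_\varepsilon)$ with $V_\varepsilon = \{a \in C_K : \|a\| > \varepsilon/2\}$ is \emph{not} admissible in the sense of Theorem~\ref{thm:basicopensets}: that theorem requires $\|a\| > \varepsilon$ (not $\varepsilon/2$) for every $a \in V$, and your $V_\varepsilon$ contains points with $\varepsilon/2 < \|a\| \leq \varepsilon$. In fact $U_F \sqcup C_K$ cannot be realised as a \emph{single} basic open $U(F,\varepsilon,V)$ at all, since $B_\varepsilon \sqcup V \subseteq C_K \setminus \{a : \|a\| = \varepsilon\}$ always omits the nonempty level set $\{\|a\| = \varepsilon\}$. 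The repair is immediate: either write $U_F \sqcup C_K$ as a union of two basic opens, e.g.\ $U(F,1,\{\|a\|>1\}) \cup U(F,2,\{\|a\|>2\})$, or---more in the spirit of the paper's one-line citation of Theorem~\ref{thm:basicopensets}---simply note that the complement of $U_F \sqcup C_K$ in $2^{\Sigma_K}\sqcup C_K$ is $2^{\Sigma_K}\setminus U_F$, which is closed by Proposition~\ref{prop:qipc}.
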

\begin{proof}
For each finite subset $F\subseteq\Sigma_K$, $\Upsilon^{-1}(U_F)=U_F\sqcup C_K$ is open in $2^{\Sigma_K}\sqcup C_K$ by Theorem~\ref{thm:basicopensets}. Thus, $\Upsilon$ is continuous. In addition, for a basic open set $U(F,\varepsilon,V) \subseteq 2^{\Sigma_K}\sqcup C_K$ from Theorem~\ref{thm:basicopensets}, we have $\Upsilon(U(F,\varepsilon,V)) = U_F$, which implies that $\Upsilon$ is open.
\end{proof}

Let $\psi_K$ and $\eta_K$ be the compositions of $\Upsilon$ with the canonical surjections $\Prim(\A_K)\to 2^{\Sigma_K}\sqcup C_K$ and $\Prim(\B_K)\to 2^{\Sigma_K}\sqcup C_K$, respectively. Then, $\psi_K$ and $\eta_K$ are continuous, open surjections by Lemma~\ref{lem:ctsopen} and \cite[Lemma~4.6]{BT}, so that $(\A_K,\psi_K)$ and $(\B_K,\eta_K)$ are C*-algebras over $2^{\Sigma_K}$.
Note that the map $\varphi_K\colon\B_K\to\A_K$ from Definition~\ref{def:varphiK} is $2^{\Sigma_K}$-equivariant.

\begin{lemma}[cf. {{\cite[Lemma~4.15]{BT}}~and~\cite[Lemma~2.10]{KT}}]
\label{lem:over2Sigma}
 Let $K$ and $L$ be number fields, and suppose $\alpha\colon \A_K\to \A_L$ is a *-isomorphism. Then, there exists a bijection $\theta\colon \Sigma_K\to \Sigma_L$ such that the following diagram commutes:
\[  \begin{tikzcd}
  \Prim(\A_K)\arrow[d,"\psi_K"]\arrow[rr,"P\mapsto \alpha(P)"] & & \Prim(\A_L)\arrow[d,"\psi_L"]\\
  2^{\Sigma_K} \arrow[rr,"\tilde{\theta}"] & & 2^{\Sigma_L}\nospacepunct{,}
  \end{tikzcd}\]
  where $\tilde{\theta}$ denotes the homeomorphism $2^{\Sigma_K}\simeq 2^{\Sigma_L}$ induced by $\theta$. Furthermore, the same statement holds for $\A_K$ and $\A_L$ replaced by $\B_K$ and $\B_L$, respectively.
\end{lemma}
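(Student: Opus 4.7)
The plan is to extract $\theta$ from the homeomorphism $\alpha_*\colon\Prim(\A_K)\overset{\cong}{\to}\Prim(\A_L)$ induced by $\alpha$, using the purely topological characterisations provided by Proposition~\ref{prop:primA}.

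First, observe that $\emptyset\in 2^{\Sigma_K}$ is the unique dense point of $\Prim(\A_K)$, so every point admits $\emptyset$ as a generalisation; combined with Proposition~\ref{prop:primA}, this characterises $C_K\subseteq\Prim(\A_K)$ intrinsically as the set of closed points having exactly two generalisations, whereas each $P\in\widehat{K^*}$ has all of $2^{\Sigma_K}\setminus\{\Sigma_K\}$ among its generalisations. Since both characterisations are preserved under homeomorphisms, $\alpha_*(C_K)=C_L$ and $\alpha_*(\widehat{K^*})=\widehat{L^*}$, so $\alpha_*$ restricts to a bijection between the respective sets of non-closed points $2^{\Sigma_K}\setminus\{\Sigma_K\}$ and $2^{\Sigma_L}\setminus\{\Sigma_L\}$.

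Next, the subspace topology on $2^{\Sigma_K}\setminus\{\Sigma_K\}\subseteq\Prim(\A_K)$ coincides with the power-cofinite topology: since $\widehat{K^*}=\varphi^{-1}(\{\Sigma_K\})$ is closed in $\Prim(\A_K)$, its complement $(2^{\Sigma_K}\setminus\{\Sigma_K\})\sqcup C_K$ is open, and the continuous open surjection $\varphi\colon\Prim(\A_K)\to 2^{\Sigma_K}\sqcup C_K$ restricts to a homeomorphism on this open set, so the claim follows from Proposition~\ref{prop:qipc}. Hence the restriction of $\alpha_*$ is a power-cofinite homeomorphism $2^{\Sigma_K}\setminus\{\Sigma_K\}\overset{\cong}{\to}2^{\Sigma_L}\setminus\{\Sigma_L\}$, and one extends it to a homeomorphism $\tilde\theta\colon 2^{\Sigma_K}\to 2^{\Sigma_L}$ by setting $\tilde\theta(\Sigma_K)=\Sigma_L$; continuity is automatic because $2^{\Sigma_K}$ is the only open neighbourhood of $\Sigma_K$ in the power-cofinite topology.

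Finally, $\theta\colon\Sigma_K\to\Sigma_L$ is extracted by characterising singletons in $(2^{\Sigma_K},{\rm pc})$ topologically as the points with exactly two generalisations (since the generalisations of $T$ in the power-cofinite topology are precisely the subsets of $T$). Hence $\tilde\theta$ restricts to a bijection of singletons which defines $\theta$ via $\tilde\theta(\{v\})=\{\theta(v)\}$, and since $v\in T\Leftrightarrow\{v\}$ is a generalisation of $T$, applying $\tilde\theta$ and its inverse gives $\tilde\theta(T)=\theta(T)$ for every $T\in 2^{\Sigma_K}$. Commutativity of the diagram is then immediate from the definitions of $\psi_K$ and $\psi_L$, and the case of $\B_K$, $\B_L$ follows verbatim using Remark~\ref{rmk:Bk}. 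The main subtlety I expect is verifying that the subspace topology on $2^{\Sigma_K}\setminus\{\Sigma_K\}\subseteq\Prim(\A_K)$ is power-cofinite; once the correct topological invariants from Proposition~\ref{prop:primA} are located, the remainder is routine topological bookkeeping.
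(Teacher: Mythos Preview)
Your proof is correct and follows essentially the same approach as the paper's: both use the characterisation of closed points and their generalisations from Proposition~\ref{prop:primA} to recover $2^{\Sigma_K}$ intrinsically from $\Prim(\A_K)$, and then extract $\theta$ from the resulting power-cofinite homeomorphism. The only organisational difference is that the paper describes $\psi_K$ directly as the quotient map collapsing $\widehat{K^*}$ to $\Sigma_K$ and collapsing $C_K\cup\{\emptyset\}$ to $\emptyset$ (and then defers to \cite[Lemma~4.15]{BT} for the extraction of $\theta$), whereas you first isolate the subspace $2^{\Sigma_K}\setminus\{\Sigma_K\}$ and then extend; this is a minor presentational variation rather than a different argument.
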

\begin{proof}
By Proposition~\ref{prop:primA}, the topological space $2^{\Sigma_K}$ is obtained by identifying all closed points $x \in \Prim(\A_K)$ which admit a generalisation other than $x$ and the unique dense point $\emptyset$, and identifying $\emptyset$ and all closed points $x \in \Prim(\A_K)$ which do not admit a generalisation other than $x$ and $\emptyset$. The rest of the proof is the same as that of \cite[Lemma~4.15]{BT}, and the proof for $\B_K$ and $\B_L$ is exactly the same (see Remark~\ref{rmk:Bk}).
\end{proof}

\begin{remark}
Let us explain the explicit relationship between $\A_K$ and $\B_K$ and the C*-algebras associated with the finite adele ring from \cite{BT}. 
Let $P_{K,\infty}$ denote the primitive ideal of $\A_K$
identified with $\Sigma_{K,\infty} \in 2^{\Sigma_K} \subseteq \Prim(\A_K)$. 
Explicitly, $P_{K,\infty}=C_0(\Az_K \setminus \Az_{K,f}) \rtimes K^*$ by \cite[Proposition~4.7]{BT}, 
so that $\A_K/P_{K,\infty}=C_0(\Az_{K,f})\rtimes K^*=\fA_K$. Similarly, if we let $Q_{K,\infty}$ be the primitive ideal of $\B_K$
identified with $\Sigma_{K,\infty} \in 2^{\Sigma_K} \subseteq \Prim(\B_K)$, 
then $\B_K/Q_{K, \infty}=C_0(\Az_{K,f}/\mu_K)\rtimes \Gamma_K=\fB_K$.
\end{remark}

As explained in a more general setting in \cite[\S~4.2]{BT}, every finite subset $F\subseteq \Sigma_K$ gives rise to a subquotient $\A_K(\{F^c\})$, and for every $v\in F^c$, there is an extension $\cE_{\A_K}^{F,v}$ (see \cite[Definition~4.1]{BT}).

\begin{definition}
We let $\A_K^\emptyset:=\A_K(\{\Sigma_K\})$ and $\B_K^\emptyset:=\B_K(\{\Sigma_K\})$. Given $v\in\Sigma_K$, put $\A_K^v:=\A_K(\{\{v\}^c\})$ and $\B_K^v:=\B_K(\{\{v\}^c\})$. We also let $\cF_K^v:= \cE_{\A_K}^{\emptyset, v}$ and $\cE_K^v:= \cE_{\B_K}^{\emptyset, v}$ be the extensions associated with $\emptyset$ and $v \in \Sigma_K$.
Denote by $\partial_K^v$ the boundary map associated with the six-term exact sequence arising from $\cE_K^v$.
\end{definition}

We let $\A_K^{v,c}$ and $\B_K^{v,c}$ denote the middle terms of the extensions $\cF_K^v$, and $\cE_K^v$, respectively. That is, $\cF_K^v$, and $\cE_K^v$ are given by
\[
\cF_K^v\colon 0\to\A_K^v\to \A_K^{v,c}\to \A_K^\emptyset\to 0
\]
and
\[
\cE_K^v\colon 0\to\B_K^v\to \B_K^{v,c}\to \B_K^\emptyset\to 0.
\]

The general results from \cite[\S~4]{BT} give the following:
\begin{proposition}
We have a canonical isomorphisms
\[
\A_K^\emptyset = C^*(K^*),\quad  \B_K^\emptyset =C^*(\Gamma_K),\quad \A_K^v = C_0(K_v^*) \rtimes K^*, \quad\text{and}\quad \B_K^v = C_0(K_v^*/\mu_K) \rtimes \Gamma_K,
\]
where $v\in\Sigma_K$.
\end{proposition}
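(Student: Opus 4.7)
The plan is to identify each of the four subquotients by invoking the general subquotient machinery from \cite[\S~4.2]{BT}: viewing $\A_K$ as a C*-algebra over $2^{\Sigma_K}$ via $\psi_K$, the subquotients attached to locally closed subsets of $2^{\Sigma_K}$ are realised as crossed products by $K^*$ over the corresponding preimages in $\Az_K$ under $\Upsilon\circ\Xi$. The work then reduces to computing these preimages and identifying the relevant closures.

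First, I would record the preimages of the points of interest. Since $\Upsilon$ restricts to the identity on $2^{\Sigma_K}$, for any non-empty $S\in 2^{\Sigma_K}$ we have $(\Upsilon\circ\Xi)^{-1}(\{S\})=\Xi^{-1}(\{S\})=\{a\in\Az_K\setminus\Az_K^* : \cZ(a)=S\}$. The preimage of $\Sigma_K$ is thus the single point $\{0\}$, while the preimage of $\{v\}^c$ is the subset $\{a\in\Az_K : a_w=0 \text{ for all } w\neq v,\, a_v\neq 0\}$, which we identify with $K_v^*$ via the embedding sending $x\in K_v$ to the adele supported only at $v$.

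Next, I would verify the required closure relations in $2^{\Sigma_K}$. A direct computation from the definition of the power-cofinite topology shows $\ol{\{S\}}^{\rm pc}=\{T\in 2^{\Sigma_K} : S\subseteq T\}$, so that $\{\Sigma_K\}$ is closed and $\ol{\{\{v\}^c\}}^{\rm pc}=\{\{v\}^c,\Sigma_K\}$. In particular, $\{\{v\}^c\}$ is locally closed and is the open complement of the closed point $\{\Sigma_K\}$ inside its closure. The preimage in $\Az_K$ of this closure is the closed $K^*$-invariant subset $\{a\in\Az_K : a_w=0 \text{ for all } w\neq v\}$, which we identify with $K_v$ via the same embedding; the open subset of $K_v$ corresponding to $\{\{v\}^c\}$ is precisely $K_v^*$.

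Applying the subquotient correspondence to these closed (resp.\ locally closed) $K^*$-invariant subsets now yields $\A_K^\emptyset = C_0(\{0\})\rtimes K^* = C^*(K^*)$ and $\A_K^v = C_0(K_v^*)\rtimes K^*$. The same argument applied to the induced action $\Gamma_K\acts \Az_K/\mu_K$ gives the analogous identifications for $\B_K$, with $\{0\}\subseteq\Az_K$ and $K_v^*\subseteq\Az_K$ replaced by their images $\{[0]\}\subseteq\Az_K/\mu_K$ and $K_v^*/\mu_K\subseteq\Az_K/\mu_K$. The bulk of the proof is the preimage bookkeeping above; the general machinery of \cite[\S~4.2]{BT} then supplies the remaining identifications essentially tautologically, so no serious obstacle is expected.
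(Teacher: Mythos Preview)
Your proposal is correct and follows essentially the same approach as the paper: both invoke the subquotient machinery from \cite[\S~4]{BT} (the paper cites \cite[Proposition~4.8]{BT} specifically, with $X=\Az_K$, $G=K^*$, and $Z=\{\Sigma_K\}$ for the first identification, and says the others are similar). Your version simply makes the preimage and closure bookkeeping explicit, which the paper leaves implicit in its one-line appeal to \cite[Proposition~4.8]{BT}.
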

\begin{proof}
Each isomorphism is obtained from \cite[Proposition~4.8]{BT}. For the first, take, in the notation from \cite[Proposition~4.8]{BT}, $X=\Az_K$, $G=K^*$, and $Z=\{\Sigma_K\}$. The other three are obtained similarly.
\end{proof}

\begin{remark} \label{rmk:description}
For $v\in\Sigma_{K,\infty}$, the extension $\cE_K^v$ can be described explicitly as follows:
\begin{equation}
\label{eqn:extinfinite}
\cE_K^v\colon \begin{cases}
0 \to C_0(\Rz^*/\mu_K) \rtimes \Gamma_K \to C_0(\Rz/\mu_K) \rtimes \Gamma_K \to C^*(\Gamma_K) \to 0  & \text{ if } v \text{ real},\\
0 \to C_0(\Cz^*/\mu_K) \rtimes \Gamma_K \to C_0(\Cz/\mu_K) \rtimes \Gamma_K \to C^*(\Gamma_K) \to 0  & \text{ if } v \text{ complex},
\end{cases}
\end{equation}
where all maps are the canonical ones. The extension $\cF_K^v$ can be described similarly.
When $v$ is finite, the projection maps $\A_K \to \fA_K$ and $\B_K \to \fB_K$ restrict to identifications of $\cF_K^v$ and $\cE_K^v$ with the extensions from \cite[Definition~4.22]{BT}.
In particular, an explicit description of the extensions $\cF_K^v$ and $\cE_K^v$ follows from \cite[\S~4.5]{BT}.
\end{remark}

\section{K-theoretic distinction of real, complex, and finite places}
\label{sec:main}
We now establish several general results on \K-theory and boundary maps for certain actions of infinitely generated free abelian groups as preparation for the proof of our main theorem (Theorem~\ref{thm:main}). We first observe a consequence of the Baum--Connes conjecture:
\begin{proposition} \label{prop:KKhomotopy}
Let $A$ be a separable C*-algebra, and let $\Gamma$ be a free abelian group. Let $\gamma^0, \gamma^1 \colon \Gamma \acts A$ be two actions. Suppose that $\gamma^0$ and $\gamma^1$ are homotopic in the sense that 
there exists an action $\ol{\gamma} \colon \Gamma \acts C([0,1],A)$ such that $(\ol{\gamma}_sf)(t)=\gamma_s^t(f(t))$ for $t \in \{0,1\}$ and $s \in \Gamma$. 
For $t \in \{0,1\}$, let $\eval_t \colon C([0,1],A) \rtimes_{\ol{\gamma}} \Gamma \to A \rtimes_{\gamma^t} \Gamma$ be the map induced by evaluation at $t$. 
Then, $\eval_0$ and $\eval_1$ induce \KK-equivalences. 
\end{proposition}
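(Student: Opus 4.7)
The plan is a two-step argument: first, show that each evaluation represents an invertible class in the equivariant KK-theory $\KK^\Gamma(C([0,1], A), A)$; second, descend to the crossed products using that $\Gamma$ is $K$-amenable. The $K$-amenability is a consequence of the Baum--Connes conjecture with coefficients, which holds for $\Gamma$ via Higson--Kasparov, since free abelian groups are amenable and in particular $a$-$T$-menable.

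For the first step, regard $C([0,1], A)$ as a $\Gamma$-C*-algebra via $\ol{\gamma}$ and $A$ as a $\Gamma$-C*-algebra via $\gamma^t$. Then $\eval_t\colon C([0,1], A) \to A$ is a $\Gamma$-equivariant surjection with kernel $C_0(I_t, A)$, where $I_0 = (0,1]$ and $I_1 = [0,1)$, carrying the restriction of $\ol{\gamma}$. Non-equivariantly this kernel is a cone over $A$, hence contractible, giving invertibility of $[\eval_t]$ in $\KK(C([0,1], A), A)$. The equivariant refinement is the main technical input: the standard contraction $(s \cdot f)(u) = f(su)$ fails to intertwine $\ol{\gamma}$ since the fiberwise actions $\gamma^u$ vary with $u$. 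Using the continuity of $\ol{\gamma}$ in $u$, one constructs an equivariant Kasparov cycle witnessing $\KK^\Gamma$-triviality of the kernel, upgrading the non-equivariant contractibility to invertibility of $[\eval_t]$ in $\KK^\Gamma(C([0,1], A), A)$.

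For the second step, apply Kasparov's descent $j^\Gamma\colon \KK^\Gamma(B, D) \to \KK(B\rtimes \Gamma, D\rtimes \Gamma)$ (by amenability of $\Gamma$, reduced and full crossed products coincide). Descent is functorial and hence preserves invertibility, so $j^\Gamma([\eval_t]) = [\eval_t \rtimes \Gamma]$ is invertible in $\KK(C([0,1], A)\rtimes \Gamma,\, A \rtimes \Gamma)$, which is precisely the claim.

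The main obstacle is the equivariant refinement of the cone's contractibility: the $\ol{\gamma}$-coupling of the homotopy parameter with the $\Gamma$-action obstructs the naive contraction, so the equivariant Kasparov cycle witnessing $\KK^\Gamma$-triviality must be built carefully from the continuity of $\ol{\gamma}$ in the homotopy parameter rather than imported from the non-equivariant setting.
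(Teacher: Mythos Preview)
Your two-step plan is reasonable in outline, but Step~1 contains the real gap and Step~2 is where you have misplaced the non-trivial input.

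In Step~1 you assert that the kernel $C_0(I_t,A)$, with the restricted $\ol\gamma$-action, is $\KK^\Gamma$-contractible, and you propose to witness this by ``an equivariant Kasparov cycle built from the continuity of $\ol\gamma$ in $u$.'' This does not work. You correctly observe that the standard cone contraction $h_s(f)(u)=f(su)$ fails to be $\Gamma$-equivariant because the fibre action $\gamma^u$ depends on $u$; but no reparametrisation or twist repairs this: to make a contraction $\tilde h_s(f)(u)=\alpha_s^u(f(\phi_s(u)))$ equivariant one would need automorphisms $\alpha_s^u$ conjugating $\gamma^{\phi_s(u)}$ to $\gamma^u$, and homotopic actions are in general not conjugate. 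In fact the $\KK^\Gamma$-triviality of a non-equivariantly contractible $\Gamma$-algebra is \emph{equivalent}, for torsion-free $\Gamma$, to the \emph{strong} Baum--Connes conjecture ($\gamma=1$) in the Meyer--Nest framework: it is the statement that weakly contractible objects vanish in $\KK^\Gamma$. This does hold for free abelian $\Gamma$ by Higson--Kasparov, but it is a deep theorem, not an explicit cycle construction, and you do not invoke it here.

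Conversely, Step~2 needs nothing beyond functoriality: Kasparov descent $j^\Gamma$ is always a functor, so it always sends $\KK^\Gamma$-invertibles to $\KK$-invertibles. $K$-amenability (which for amenable $\Gamma$ just says full and reduced crossed products agree) plays no role. So the Baum--Connes input you flag for descent is actually needed for Step~1, and you have placed it in the wrong step.

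The paper avoids the equivariant contractibility question entirely. It works directly at the level of crossed products: from the exact sequence
\[
0\to C_0([0,1),A)\rtimes_{\ol\gamma}\Gamma\to C([0,1],A)\rtimes_{\ol\gamma}\Gamma\xrightarrow{\eval_1} A\rtimes_{\gamma^1}\Gamma\to 0,
\]
it invokes the Meyer--Nest theorem that, for torsion-free $\Gamma$ satisfying Baum--Connes with coefficients, the crossed product of any $\Gamma$-algebra with trivial $\K$-theory is $\KK$-equivalent to $0$. Since the cone $C_0([0,1),A)$ is non-equivariantly contractible, its crossed product vanishes in $\KK$, and the six-term exact sequence in ordinary $\KK$ then forces $[\eval_1]_\KK$ (and likewise $[\eval_0]_\KK$) to be invertible. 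This is shorter and uses only Baum--Connes with coefficients rather than the strong form.
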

\begin{proof}
For a *-homomorphism $\alpha$ between C*-algebras, we let $[\alpha]_\KK$ denote the associated element in \KK-theory. We have the short exact sequence 
\[ 0 \to C_0([0,1),A) \rtimes_{\ol{\gamma}} \Gamma \to  C([0,1],A) \rtimes_{\ol{\gamma}} \Gamma \xrightarrow{\eval_1} A \rtimes_{\gamma} \Gamma \to 0. 
\]
Since $C_0((0,1],A) \rtimes_{\ol{\gamma}} \Gamma$ is \KK-equivalent to $0$ by \cite[Theorem~9.3]{MeyerNest:06}, the maps
$(- \ho_A [\eval_1]_{\KK}) \colon \KK(D,A) \to \KK(D,B)$ and $([\eval_1]_{\KK} \ho_B -) \colon \KK(B,D) \to \KK(A,D)$ are isomorphisms for any separable C*-algebra $D$ by the six-term exact sequence in \KK-theory \cite[Theorem 19.5.7]{B}. Hence, $[\eval_1]_{\KK}$ is a \KK-equivalence, and we can see that $[\eval_0]_{\KK}$ is also a \KK-equivalence in the same way. 
\end{proof}

Let $\mu \subseteq \Tz$ be a finite subgroup. Define extensions $\cR$, $\cR_1$, and $\cC_\mu$ by 
\begin{equation}\begin{tikzcd}[
  row sep = 0ex,
  column sep = 1ex,
/tikz/column 1/.append style={anchor=base east},
 /tikz/column 2/.append style={anchor=base west},
  ampersand replacement=\&,
]
 \cR \colon \& 0  \to C_0(0,\infty) \to C_0[0,\infty) \to \Cz \to 0, \\
  \cR_1  \colon \& 0  \to C_0(\Rz^*/\pmone) \to C_0(\Rz/\pmone) \to \Cz \to 0, \\
   \cC_\mu \colon  \& 0  \to C_0(\Cz^*/\mu) \to C_0(\Cz/\mu) \to \Cz \to 0,
\end{tikzcd}\end{equation}
where all of the rightmost *-homomorphisms are evaluations at $0$. The boundary map of $\cR$ is the Bott map, which is an isomorphism (of degree 1). Let $\beta \in \K_1(C_0(0,\infty))$ denote the generator satisfying  $\partial_\cR([1_\Cz]_0) = \beta$.

Throughout this section, $\cX$ denotes $\cR_1$ or $\cC_\mu$ for some $\mu \subseteq \Tz$. If $\cX=\cR_1$, then we put $X:=\Rz/\pmone$ and $X^*:=\Rz^*/\pmone$. Similarly, if $\cX=\cC_\mu$, then we let $X:=\Cz/\mu$ and $X^*:=\Cz^*/\mu$. 
Then, the extension $\cX$ is described as 
\[ \cX \colon 0 \to C_0(X^*) \to C_0(X) \to \Cz \to 0.\]

Let $\Gamma$ be a free abelian subgroup of the multiplicative group $X^*$. Let $\gamma$ be the action of $\Gamma$ on $X$ by multiplication. We also denote by $\gamma$ the restriction of $\gamma$ to $X^*$. Let $\cX \rtimes_\gamma \Gamma$ denote the extension
\[ 
\cX \rtimes_\gamma \Gamma \colon 0 \to C_0(X^*) \rtimes_{\gamma} \Gamma \to C_0(X) \rtimes_{\gamma} \Gamma \to C^*(\Gamma) \to 0. 
\]
Let $\Abs \colon X \to [0,\infty)$ be the continuous and  proper map $z \to |z|$. Then, the restriction of $\Abs$ to $X^*$ has a splitting (note that if $\cX=\cR_1$, then $\Abs$ is a homeomorphism), so that $(0,\infty)$ is a direct product factor of $X^*$. We let $A:=\Cz$ if $\cX=\cR_1$ and $A:=C(\Tz/\mu)$ if $\cX=\cC_\mu$, and we make the identification $C_0(X^*)= C_0(0,\infty) \otimes A$ by way of $\Abs$.

\begin{lemma} \label{lem:Bott}
Let $\iota \colon \Cz \to A$ be the inclusion map. 
Then, the following diagram commutes: 
\[  \begin{tikzcd}
  \K_*(\Cz) \arrow[rr,"\iota_*"] \arrow[drr,"\partial_\cX"] & &  \K_*(A) \arrow[d, "\partial_{\cR \otimes A}"] \\
  & & \K_{*+1}(C_0(X^*)). 
  \end{tikzcd}\]
\end{lemma}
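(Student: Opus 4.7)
The plan is to factor both legs of the target triangle through the composite $(1\otimes\iota)_*\circ\partial_\cR\colon\K_*(\Cz)\to\K_{*+1}(C_0(0,\infty)\otimes A)$, and then invoke naturality of the K-theory boundary map for two carefully chosen morphisms of extensions. Concretely, I would construct a morphism of extensions $\cR\to\cX$ and a morphism of extensions $\cR\to\cR\otimes A$, each of which has $1\otimes\iota$ as its kernel component; the two naturality squares then combine into the desired triangle.

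For the first morphism, note that $\Abs\colon X\to[0,\infty)$ is continuous and proper with $\Abs^{-1}(0)=\{0\}$, so pullback along $\Abs$ yields a $*$-homomorphism $\Abs^*\colon C_0[0,\infty)\to C_0(X)$ that restricts to $C_0(0,\infty)\to C_0(X^*)$ and intertwines the two evaluation-at-zero maps. Hence $\Abs^*$ defines a morphism of extensions $\cR\to\cX$ whose quotient map is $\id_\Cz$. The key identification to check is that, under the splitting $X^*\cong(0,\infty)\times(\Tz/\mu)$ (trivial when $\cX=\cR_1$) that identifies $C_0(X^*)$ with $C_0(0,\infty)\otimes A$, the kernel component of this morphism becomes exactly $1\otimes\iota\colon C_0(0,\infty)\otimes\Cz\to C_0(0,\infty)\otimes A$, since for $f\in C_0(0,\infty)$ the pullback $\Abs^*(f)(z)=f(|z|)$ depends only on the first coordinate. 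Naturality of the boundary map then gives
\begin{equation}
\partial_{\cX}=(1\otimes\iota)_*\circ\partial_{\cR}.
\end{equation}

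For the second morphism, tensoring the extension $\cR$ with the $*$-homomorphism $\iota\colon\Cz\to A$ (using nuclearity of $A$, which is trivially satisfied as $A$ is commutative) produces a morphism of extensions $\cR\to\cR\otimes A$ whose quotient component is $\iota$ and whose kernel component is again $1\otimes\iota$. Naturality of $\partial$ yields
\begin{equation}
\partial_{\cR\otimes A}\circ\iota_*=(1\otimes\iota)_*\circ\partial_{\cR},
\end{equation}
and chaining the two identities gives $\partial_\cX=\partial_{\cR\otimes A}\circ\iota_*$, proving the lemma. I do not anticipate a serious obstacle: the one point requiring genuine care is the verification that, under the chosen product decomposition of $X^*$, the pullback $\Abs^*$ on the ideals truly coincides with $1\otimes\iota$ rather than merely with some homotopic variant; this is a direct unwinding of definitions and is what rigidifies the two factorizations to land on the same composite.
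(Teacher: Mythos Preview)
Your proposal is correct and follows essentially the same approach as the paper: both arguments hinge on the morphism of extensions $\cR\to\cX$ induced by $\Abs^*$ and on the observation that, under the identification $C_0(X^*)\cong C_0(0,\infty)\otimes A$, the kernel component of $\Abs^*$ is precisely $1\otimes\iota$. The only cosmetic difference is that the paper finishes by tracking the single generator $[1_\Cz]_0$ through the composite $\partial_{\cR\otimes A}^{-1}\circ\partial_\cX$ (using invertibility of the Bott map), whereas you close the triangle via a second naturality square for the morphism $\cR\to\cR\otimes A$ obtained by tensoring with $\iota$; both routes encode the same identity $(\Abs^*)_*=(1\otimes\iota)_*$ on $\K$-theory.
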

\begin{proof}
We have the following commutative diagram with exact rows: 
 \begin{equation} \begin{tikzcd}
 	\cR \colon &
	0 \arrow[r] &  C_0(0,\infty) \arrow[d, "\Abs^*"]  \arrow[r] &  C_0[0,\infty)  \arrow[d, "\Abs^*"] \arrow[r] &  \Cz \arrow[d, "\id"]  \arrow[r] &  0 \\ 
	\cX \colon & 0 \arrow[r] & 
	C_0(X^*) \arrow[r] & C_0(X)  \arrow[r]  &\Cz \arrow[r] &  0\nospacepunct{.}
 \end{tikzcd}\end{equation}
 Hence, $\partial_{\cX} = (\Abs^*)_* \circ \partial_{\cR}$ by naturality of the boundary map. 
 Since $\partial_{\cR \otimes A}=\partial_\cR \otimes \id$ is invertible, the following calculation completes the proof:
  \[ (\partial_{\cR \otimes A}^{-1} \circ \partial_\cX)([1_\Cz]_0) = (\partial_{\cR \otimes A}^{-1} \circ  (\Abs^*)_* \circ \partial_{\cR})([1_\Cz]_0) = \partial_{\cR \otimes A}^{-1}(\beta \otimes [1_A]_0) = \partial_\cR^{-1}(\beta) \otimes [1_A]_0 =[1_A]_0.\qedhere 
  \]
\end{proof}

The following result is the key observation on boundary maps.
\begin{proposition} \label{prop:bijinj}
The boundary map $\K_*(C^*(\Gamma)) \to \K_{*+1}(C_0(\Rz^*/\pmone) \rtimes_{\gamma} \Gamma)$  of $\cR_1 \rtimes_\gamma \Gamma$ is bijective. In addition, for any finite subgroup $\mu \subseteq \Tz$, the boundary map $\K_*(C^*(\Gamma)) \to \K_{*+1}(C_0(\Cz^*/\mu) \rtimes_{\gamma} \Gamma)$ of $\cC_\mu \rtimes_\gamma \Gamma$ is injective but not surjective. 
\end{proposition}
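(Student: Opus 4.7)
The strategy is to first homotope the $\Gamma$-action to the trivial action via Proposition~\ref{prop:KKhomotopy}, reducing to the extension $\cX \otimes C^*(\Gamma)$, and then use Lemma~\ref{lem:Bott} to reduce the boundary map calculation to an analysis of the inclusion $\iota \colon \Cz \to A$ on $\K$-theory.

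Since $\Gamma$ is free abelian, fix free generators $\{s_j\}_{j \in J}$ and, using the path-connectedness of $X^*$ (both $\Rz^*/\pmone \cong (0,\infty)$ and $\Cz^*/\mu$ are path-connected), choose continuous paths $p_j \colon [0,1] \to X^*$ with $p_j(0) = s_j$ and $p_j(1) = 1$. These determine a continuous family of homomorphisms $\rho_t \colon \Gamma \to X^*$ by $\rho_t(\prod s_j^{n_j}) = \prod p_j(t)^{n_j}$, with $\rho_0$ the inclusion and $\rho_1$ trivial. The corresponding multiplication actions $\gamma^t$ on $X$ and $X^*$ (together with the trivial action on $\Cz$) package into $\Gamma$-actions on $C([0,1], C_0(X))$, $C([0,1], C_0(X^*))$, and $C([0,1], \Cz)$ via $(\ol{\gamma}_s f)(t) = \gamma_s^t(f(t))$, assembling into a $\Gamma$-equivariant extension $\cX \otimes C([0,1])$. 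Applying Proposition~\ref{prop:KKhomotopy} term by term, both $\eval_0$ and $\eval_1$ induce \KK-equivalences on each constituent algebra, yielding morphisms of extensions
\[ \cX \rtimes_{\gamma}\Gamma \xleftarrow{\eval_0} (\cX \otimes C([0,1])) \rtimes_{\ol{\gamma}} \Gamma \xrightarrow{\eval_1} \cX \otimes C^*(\Gamma) \]
in which all nine term-wise maps are \KK-equivalences. By naturality of the six-term exact sequence, the boundary maps of the two extremes correspond under the induced $\K$-theory isomorphisms. Hence it suffices to treat the extension $\cX \otimes D$, where $D := C^*(\Gamma)$.

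Applying Lemma~\ref{lem:Bott} tensored with $\id_D$ gives
\[ \partial_{\cX \otimes D} = \partial_{\cR \otimes A \otimes D} \circ (\iota_* \otimes \id_D). \]
Since $\partial_{\cR \otimes A \otimes D} = \partial_{\cR} \otimes \id_{A \otimes D}$ is a Bott isomorphism, the injectivity and surjectivity of $\partial_{\cX \otimes D}$ coincide with those of $\iota_* \otimes \id_D \colon \K_*(D) \to \K_*(A \otimes D)$. For $\cX = \cR_1$ we have $A = \Cz$ and $\iota = \id$, so $\iota_* \otimes \id_D$ is the identity---hence bijective. For $\cX = \cC_\mu$ we have $A \cong C(\Tz)$ and $\iota$ is the inclusion of constants, which is split by evaluation at any point of $\Tz$. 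Tensoring the resulting split extension $0 \to C_0(\Rz) \to C(\Tz) \to \Cz \to 0$ with $D$ and using Bott periodicity for $C_0(\Rz)$ gives $\K_*(C(\Tz) \otimes D) \cong \K_*(D) \oplus \K_{*+1}(D)$, under which $\iota_* \otimes \id_D$ is the inclusion of the first summand. This is a split injection, but since $[1_D] \in \K_0(D)$ is nonzero, the complementary summand $\K_{*+1}(D)$ contributes nontrivially in at least one grading (e.g. in degree $1$, the map $\K_1(D) \to \K_1(D) \oplus \K_0(D)$ misses the nonzero $\K_0(D)$ factor), so the boundary map is not surjective.

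The main obstacle is setting up the homotopy of actions carefully so that Proposition~\ref{prop:KKhomotopy} applies term-wise to the entire extension $\cX$, and tracking naturality of the six-term sequence across the three \KK-equivalences. Once that reduction is in place, the $\K$-theoretic computation is an essentially formal consequence of Lemma~\ref{lem:Bott} together with the split extension for $C(\Tz)$.
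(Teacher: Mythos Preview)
Your proof is correct and follows essentially the same strategy as the paper: homotope the action to the trivial one via Proposition~\ref{prop:KKhomotopy}, reduce to the extension $\cX \otimes C^*(\Gamma)$, and then analyse the boundary map through Lemma~\ref{lem:Bott}. The only difference is cosmetic: in the final step the paper uses the identification $\partial_{\cX \otimes C^*(\Gamma)} = \partial_\cX \otimes \id_{\K_*(C^*(\Gamma))}$ to reduce directly to $\iota_* \colon \K_*(\Cz) \to \K_*(A)$ (so the non-surjectivity comes from $\K_1(C(\Tz)) \cong \Zz$), whereas you keep the tensor factor $D = C^*(\Gamma)$ throughout and extract the same conclusion from the split extension $0 \to C_0(\Rz) \otimes D \to C(\Tz) \otimes D \to D \to 0$. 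Both routes are equivalent; yours has the mild advantage of not invoking a K\"unneth identification, at the cost of carrying $D$ along. One small slip: there are six term-wise vertical maps in your diagram of three extensions, not nine.
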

\begin{proof}
First, we show that $\gamma$ is homotopic to the trivial action. Fix a basis $B$ of $\Gamma$. 
For each $b \in B$, let $x_b \colon [0,1] \to X^*$ be a path in $X^*$ satisfying $x_b(0)=1$ and $x_b(1)=b$. Such a path exists because $X^*$ is path-connected. For each $t \in [0,1]$, let $\gamma_b^t \colon \Zz \acts  C_0(X)$ be the action given by multiplication with $x_b(t)$.  
Since $\{\gamma_b^t \colon b\in B, t \in [0,1]\}$ is a commuting family of *-automorphisms, we can 
define an action $\ol{\gamma} \colon \Gamma \acts C([0,1], C_0(X))$ by $(\ol{\gamma}_b(f))(t)= \gamma_b^t(f(t))$ for $b \in B$, $f \in C([0,1], C_0(X))$, and $t \in [0,1]$. Then, $\ol{\gamma}$ is a homotopy between $\gamma$ and the trivial action. 

We have the following commutative diagram with exact rows: 
 \begin{equation} \begin{tikzcd}[column sep=2ex]
	\cX \otimes C^*(\Gamma) \colon & 0\arrow{r}&	
	C_0(X^*) \otimes C^*(\Gamma) \arrow[r]  & C_0(X) \otimes C^*(\Gamma) \arrow[r] & C^*(\Gamma) \arrow[r] & 0\\
	  & 0\arrow{r}&	
	 C([0,1], C_0(X^*)) \rtimes_{\ol{\gamma}} \Gamma \arrow[u, "\eval_0"] \arrow[d, "\eval_1"] \arrow[r] &
	C([0,1], C_0(X)) \rtimes_{\ol{\gamma}} \Gamma \arrow[u, "\eval_0"] \arrow[d, "\eval_1"] \arrow[r] & 
	C[0,1] \otimes C^*(\Gamma) \arrow[u, "\eval_0"] \arrow[d, "\eval_1"] \arrow[r] & 0\\
	\cX \rtimes_\gamma \Gamma \colon & 
	0\arrow{r}&	C_0(X^*) \rtimes_{\gamma} \Gamma \arrow[r] & C_0(X) \rtimes_{\gamma} \Gamma \arrow[r] & C^*(\Gamma) \arrow[r] & 0\nospacepunct{.} 
 \end{tikzcd}\end{equation}
 By Proposition \ref{prop:KKhomotopy}, all vertical maps in the first two columns induce KK-equivalences. For the right column, $\eval_t \colon C[0,1] \otimes C^*(\Gamma) \to C^*(\Gamma)$ induces the same map in K-theory for $t \in \{0,1\}$, since $\eval_0$ and $\eval_1$ are homotopic. Hence, it suffices to show that $\partial_{\cX \otimes C^*(\Gamma) }$ has the desired property. 
 Under the identification $\K_0(\Cz) \otimes \K_*(C^*(\Gamma)) = \K_*(C^*(\Gamma))$, we have $\partial_{\cX \otimes C^*(\Gamma)} = \partial_{\cX} \otimes \id_{\K_*(C^*(\Gamma))}$ (see \cite[\S~2.2]{BT}). Hence, by Lemma~\ref{lem:Bott}, $\partial_{\cX \otimes C^*(\Gamma) }$ has the desired property if and only if $\iota_* \colon \K_*(\Cz) \to \K_*(A)$ does. If $\cX=\cR_1$, then $\iota_*$ is the identity map (note that $A=\Cz$ in this case), which is clearly bijective. Suppose $\cX=\cC_\mu$. Then, since $\Tz/\mu$ is homeomorphic to $\Tz$, we have $\K_*(A) = \Zz[1]_0 \oplus \Zz a$, where $a \in \K_1(C(\Tz/\mu))$ is a generator (note that $A=C(\Tz/\mu)$ in this case). Hence, $\iota_*$ is injective but not surjective. 
\end{proof}

We are now ready for our main result on distinguishing places.
\begin{theorem} \label{thm:main}
    The boundary map $\partial_K^v \colon \K_*(\B_K^\emptyset) \to \K_*(\B_K^v)$ is bijective if and only if $v$ is real, is injective but not surjective if and only if $v$ is complex, and is not injective if and only if $v$ is finite. 
\end{theorem}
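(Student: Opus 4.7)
The plan is to handle the three types of places separately, applying Proposition~\ref{prop:bijinj} for the infinite places and an explicit projection argument for the finite places. Remark~\ref{rmk:description} identifies $\cE_K^v$ (for $v$ infinite) with an extension of exactly the form appearing in Proposition~\ref{prop:bijinj}, and the crucial input for applying this proposition is that $\Gamma_K = K^*/\mu_K$ is free abelian---a classical consequence of Dirichlet's unit theorem together with the exact sequence $1 \to \cO_K^* \to K^* \to P_K \to 1$, where $P_K$ denotes the (free abelian) group of nonzero principal fractional ideals of $\cO_K$.

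When $v$ is real, since $K$ admits a real embedding we have $\mu_K \subseteq \Rz \cap \Tz = \pmone$, and since $-1 \in K^*$, in fact $\mu_K = \pmone$; the real embedding identifies $\Gamma_K$ with a free abelian subgroup of $\Rz^*/\pmone$, so Proposition~\ref{prop:bijinj} applied to $\cR_1 \rtimes_\gamma \Gamma_K$ yields bijectivity of $\partial_K^v$. When $v$ is complex, $\mu_K$ is a finite subgroup of $\Tz$ and the complex embedding identifies $\Gamma_K$ with a free abelian subgroup of $\Cz^*/\mu_K$, so Proposition~\ref{prop:bijinj} applied to $\cC_{\mu_K} \rtimes_\gamma \Gamma_K$ gives that $\partial_K^v$ is injective but not surjective.

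When $v$ is finite, the goal is to exhibit a nonzero element of $\ker \partial_K^v$. Since $K_v$ is non-archimedean, $\cO_{K,v}$ is compact and open in $K_v$; because $\mu_K \subseteq \cO_{K,v}^*$, the image $\cO_{K,v}/\mu_K$ is compact open in $K_v/\mu_K$, so the characteristic function $\chi := \chi_{\cO_{K,v}/\mu_K}$ is a projection in $C_0(K_v/\mu_K) \subseteq \B_K^{v,c}$. The quotient map $\B_K^{v,c} \to \B_K^\emptyset = C^*(\Gamma_K)$, induced by evaluating $C_0(K_v/\mu_K)$ at $0$, sends $\chi$ to $1$ because $0 \in \cO_{K,v}$; hence $[1]_0 \in \K_0(\B_K^\emptyset)$ lies in the image of $\K_0(\B_K^{v,c}) \to \K_0(\B_K^\emptyset)$ and thus in $\ker \partial_K^v$ by exactness. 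As $C^*(\Gamma_K) \cong C(\widehat{\Gamma_K})$ is unital commutative, $[1]_0 \neq 0$, so $\partial_K^v$ fails to be injective.

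Since the three types of places are mutually exclusive and exhaustive, the three implications above immediately yield all the ``if and only if'' directions. The main technical subtlety I anticipate is not a deep obstacle but rather the bookkeeping required to cleanly match the actions of $\Gamma_K$ on $K_v/\mu_K$ (real, complex, non-archimedean) with the abstract setup of Proposition~\ref{prop:bijinj} and the explicit form of $\cE_K^v$ recorded in Remark~\ref{rmk:description}.
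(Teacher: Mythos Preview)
Your proof is correct. For the infinite places you follow exactly the paper's route: invoke Remark~\ref{rmk:description} to identify $\cE_K^v$ with $\cR_1 \rtimes_\gamma \Gamma_K$ or $\cC_{\mu_K} \rtimes_\gamma \Gamma_K$ and then apply Proposition~\ref{prop:bijinj}, after noting that $\Gamma_K$ is free abelian.

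For the finite places your argument differs from the paper's. The paper simply cites \cite[Lemma~5.10]{BT}, whereas you give a direct, self-contained argument: the characteristic function of the compact open set $\cO_{K,v}/\mu_K$ is a projection in $C_0(K_v/\mu_K) \subseteq \B_K^{v,c}$ that evaluates to $1$ at $0$, so $[1]_0 \in \K_0(C^*(\Gamma_K))$ lifts and hence lies in $\ker\partial_K^v$. This is more elementary and avoids the external reference; it exploits precisely the feature that distinguishes the non-archimedean case from the archimedean one (existence of a compact open neighbourhood of $0$), which is pedagogically pleasing. The trade-off is that the cited lemma in \cite{BT} presumably gives more detailed information about $\partial_K^v$ in the finite case, but for the statement of Theorem~\ref{thm:main} your argument suffices. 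One small point worth making explicit is that Remark~\ref{rmk:description} (via \cite[Definition~4.22]{BT}) indeed identifies $\B_K^{v,c}$ with $C_0(K_v/\mu_K)\rtimes\Gamma_K$ and the quotient map with the one induced by evaluation at $0$; you use this implicitly.
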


\begin{proof}
The case that $v$ is finite follows from \cite[Lemma~5.10]{BT}, and the case that $v$ is infinite follows from Remark~\ref{rmk:description} combined with Proposition~\ref{prop:bijinj}. 
\end{proof}

The boundary maps $\partial_K^v$ in Theorem~\ref{thm:main} are invariants of $\B_K$ as a C*-algebra over the power set of a countable infinite set, see \cite[\S~4.2]{BT}.
For number fields $K$ and $L$, any *-isomorphism between $\B_K$ and $\B_L$ is automatically $2^{\Sigma_K}$-equivariant, after identifying $\Sigma_K$ and $\Sigma_L$ by Lemma~\ref{lem:over2Sigma}, so real, complex, and finite places are distinguishable from $\B_K$. For $\A_K$, we have the following consequence:

\begin{corollary} 
\label{cor:main}
Let $\alpha \colon \A_K \to \A_L$ be a  *-isomorphism, and let $\theta \colon \Sigma_K \to \Sigma_L$ be the bijection induced by $\alpha$ from Lemma~\ref{lem:over2Sigma}. 
Then, $v$ is real if and only if $\theta(v)$ is real, $v$ is complex if and only if $\theta(v)$ is complex, and $v$ is finite if and only if $\theta(v)$ is finite. As a consequence, $\A_K$ is *-isomorphic to $\A_L$ if and only if $K$ is isomorphic to $L$. 
\end{corollary}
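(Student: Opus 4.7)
The plan is to first establish a K-theoretic distinction of real, complex, and finite places for the extensions $\cF_K^v$ inside $\A_K$, parallel to Theorem~\ref{thm:main} for $\B_K$, and then combine this with the finite-adelic rigidity \cite[Theorem~1.1]{BT}. First I would invoke Lemma~\ref{lem:over2Sigma}: since $\alpha$ is $2^{\Sigma_K}$-equivariant with respect to $\theta$, it restricts for every $v\in\Sigma_K$ to an isomorphism of extensions $\cF_K^v\cong\cF_L^{\theta(v)}$, and in particular carries $\partial_{\cF_K^v}$ to $\partial_{\cF_L^{\theta(v)}}$. Thus any characterisation of the type of $v$ expressible in terms of $\partial_{\cF_K^v}$ automatically transfers to $\theta(v)$.

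The central step is to prove the analog of Theorem~\ref{thm:main} for $\cF_K^v$: $\partial_{\cF_K^v}$ is bijective if and only if $v$ is real, injective but not surjective if and only if $v$ is complex, and not injective if and only if $v$ is finite. For finite $v$, Remark~\ref{rmk:description} identifies $\cF_K^v$ with the corresponding extension inside $\fA_K$, and \cite[Lemma~5.10]{BT} gives the non-injectivity. For infinite $v$ I would exploit that $\mu_K$ acts freely on $K_v^*$ (being a finite multiplicative subgroup), so that $C_0(K_v^*)\rtimes\mu_K$ is Morita equivalent to $C_0(K_v^*/\mu_K)$; combined with the splitting $K^*=\mu_K\times\Gamma_K$ (available since $\Gamma_K$ is free abelian), this upgrades to a stable isomorphism $\A_K^v\cong\B_K^v$ compatible with the residual $\Gamma_K$-action. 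The parallel analysis of the middle term $\A_K^{v,c}=C_0(K_v)\rtimes K^*$ is subtler because $\mu_K$ fixes $0\in K_v$; running a $\widehat{\mu_K}$-isotypic decomposition would express $\cF_K^v$ stably as a sum indexed by $\widehat{\mu_K}$, whose trivial-character summand is $\cE_K^v$ (handled by Theorem~\ref{thm:main}) and whose non-trivial summands each fall under Proposition~\ref{prop:bijinj}.

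With this distinction in hand, $\theta$ preserves real, complex, and finite places, which is the first assertion. In particular $\alpha$ sends $P_{K,\infty}$ to $P_{L,\infty}$ and descends to a *-isomorphism $\bar\alpha\colon\fA_K\to\fA_L$; applying \cite[Theorem~1.1]{BT} to $\bar\alpha$ then yields the field isomorphism $K\cong L$ constructed from $\bar\alpha$, and hence from $\alpha$. The converse implication is immediate since isomorphic number fields yield isomorphic C*-algebras. The hard part will be the infinite-place step above: the torsion subgroup $\mu_K\subseteq K^*$ prevents a direct application of Proposition~\ref{prop:bijinj}, and the expected resolution is the $\widehat{\mu_K}$-character decomposition just sketched, each of whose pieces does fit the framework of Proposition~\ref{prop:bijinj}.
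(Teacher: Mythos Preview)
Your overall plan---reduce to the finite-adelic rigidity theorem by showing $\theta$ preserves place types---is the right shape, but the ``central step'' has a genuine gap. The trichotomy you assert for $\partial_{\cF_K^v}$ is false: for $v$ real (so $\mu_K=\{\pm 1\}$), the source $\K_*(\A_K^\emptyset)=\K_*(C^*(K^*))\cong \K_*(C^*(\Gamma_K))^{\oplus 2}$ decomposes over $\widehat{\mu_K}$, while the target $\K_{*+1}(\A_K^v)\cong\K_{*+1}(\B_K^v)$ is a \emph{single} copy (precisely because $\mu_K$ acts freely on $K_v^*$, as you observe). A corner computation with the projections $p_\chi=\tfrac12(1\pm u)$ shows $\partial_{\cF_K^v}$ sends each $[p_\chi]$ to the same generator, so $\partial_{\cF_K^v}$ is the ``sum'' map $\K_*(C^*(\Gamma_K))^{2}\to\K_*(C^*(\Gamma_K))$---surjective but not injective. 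Thus ``bijective iff real'' fails. The same phenomenon occurs in the complex case.

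The underlying error is the isotypic decomposition: the projections $p_\chi\in C^*(\mu_K)$ are central in $\A_K^\emptyset=C^*(K^*)$ but not in $\A_K^v$ or $\A_K^{v,c}$, so $\cF_K^v$ does \emph{not} split as $\bigoplus_{\chi}\cE_\chi$ at the level of C*-algebras, nor does its boundary map split as a direct sum of maps into $|\mu_K|$ separate targets. Each corner $p_\chi(\cdot)p_\chi$ does recover a copy of $\cE_K^v$, but the corner inclusions all land in the \emph{same} $\K_{*+1}(\A_K^v)$, so their contributions add rather than sit side by side.

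The paper avoids this entirely: instead of proving a trichotomy for $\partial_{\cF_K^v}$, it uses the dual action to adjust $\alpha$ by some $\tau_\chi$ (as in \cite[Proposition~5.4]{BT}) so that the resulting isomorphism matches the trivial-character summands of $\A_K^\emptyset$ and $\A_L^\emptyset$ and induces isomorphisms $\K_*(\B_K^v)\cong\K_*(\B_L^{\theta(v)})$ intertwining $\partial_K^v$ with $\partial_L^{\theta(v)}$. Then Theorem~\ref{thm:main} applies directly. If you want to salvage your route, you would need a different invariant of $\partial_{\cF_K^v}$ (e.g.\ surjectivity distinguishes real from complex, but you then still need to separate complex from finite at the $\A$-level), and this requires a fresh argument beyond what you sketched.
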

\begin{proof}
The *-isomorphism $\alpha$ induces an isomorphism of extensions $\cF_K^v \to \cF_L^{\theta(v)}$, that is, a triplet of *-isomorphisms which makes the following diagram commute: 
 \begin{equation} \begin{tikzcd}
	\cF_K^v \colon 0\arrow{r}&	\A_K^{v} \arrow[d]\arrow[r]  & \A_K^{v,c} \arrow[d] \arrow[r] & \A_K^\emptyset \arrow[d] \arrow[r] & 0\\
	\cF_L^{\theta(v)} \colon 0\arrow{r}&	\A_L^{\theta(v)} \arrow[r]  & \A_L^{\theta(v),c}\arrow[r] & \A_L^\emptyset \arrow[r] & 0\nospacepunct{.}
 \end{tikzcd}\end{equation}
Note that $\A_K^\emptyset = C^*(\Gamma_K)^{\oplus \mu_K}$. 
Similarly to \cite[Proposition~5.4]{BT}, by composing $\alpha$ with $\tau_\chi$ for some $\chi \in \widehat{L^*}$, where $\tau \colon \widehat{L^*} \acts \A_L$ is the dual action, we obtain an isomorphism $\alpha^v_* \colon \K_*(\B_K^v) \to \K_*(\B_L^{\theta(v)})$ for each $v \in \Sigma_K$ and a *-isomorphism $\alpha^\emptyset \colon C^*(\Gamma_K) \to C^*(\Gamma_L)$ such that the following diagram commutes:
 \begin{equation} \begin{tikzcd}
	\K_*(C^*(\Gamma_K)) \arrow[r, "\partial_K^v"] \arrow[d, "\alpha^\emptyset_*"] & \K_*(\B_K^v) \arrow[d, "\alpha^v_*"] \\
	\K_*(C^*(\Gamma_L)) \arrow[r, "\partial_L^{\theta(v)}"] & \K_*(\B_L^{\theta(v)})\nospacepunct{.}
 \end{tikzcd}\end{equation}
Note that the bijection $\Sigma_L \to \Sigma_L$ induced by $\tau_\chi$ is the identity map. 
Hence, the first claim holds by Theorem~\ref{thm:main}. 
It follows that $\alpha(P_{K,\infty})=P_{L,\infty}$, so the second claim holds by \cite[Theorem~1.1]{BT}. 
\end{proof}

\begin{remark}
\label{rmk:ktheory}
The full strength of Theorem~\ref{thm:main} is not needed to derive the second claim in Corollary~\ref{cor:main}. Indeed, we can determine whether or not $v$ is a finite place from properties of $\K_0(\B_K^v)$ as follows. 
Suppose $v$ is a finite place corresponding to a prime $\p$, and let $p$ be the rational prime lying under $\p$. Then, $\B_K^v\cong \Kz\otimes B_K^v$, where $B_K^v$ is the ``unital part'' of $\B_K^v$ (see \cite[Definition~2.8]{BT}). The C*-algebra $B_K^v$ falls into the general setting of \cite[\S~3.3]{Tak2}. C*-algebras from \cite[\S~3.3]{Tak2} all have a unique trace, and the image of $\K_0$ under this trace is a direct summand in their $\K_0$-group (cf. the proof of \cite[Proposition~3.12]{Tak2}). Hence, $\K_0(B_K^v)$ contains $\Zz[1/p]$ as a direct summand (see \cite[\S~9.1]{BT}).
On the other hand, if $v$ is infinite, then by the proof of Proposition~\ref{prop:bijinj}, $\K_0(\B_K^v)\cong \K_1(C^*(\Gamma_K))$ is free abelian.
\end{remark}

\end{document}